\newtheorem{theorem}{Theorem}[section]
\newtheorem{lemma}[theorem]{Lemma}
\newtheorem{proposition}{Proposition}
\theoremstyle{definition}
\theoremstyle{remark}
\numberwithin{equation}{section}
\begin{document}

\title[On degenerate $(q,p)$-Laplace equations]{On degenerate $(q,p)$-Laplace equations  corresponding 
	to an inverse spectral problem }


\author{Y.Sh.~Il'yasov}
\address{Institute of Mathematics of UFRC RAS, 112, Chernyshevsky str., 450008 Ufa, Russia}
\email{ilyasov02@gmail.com}

\author{N.F.~Valeev}
\address{Institute of Mathematics of UFRC RAS, 112, Chernyshevsky str., 450008 Ufa, Russia}
\email{valeevnf@mail.ru}

\subjclass[2020]{Primary }

\date{}

\dedicatory{}

\commby{}

\begin{abstract}

	Two main results are presented: 1)   a new class of applied problems that lead to equations with $(p,q)$-Laplace is presented; 2)   a  method for solving nonlinear boundary value problems involving $(p,q)$-Laplace with measurable unbounded coefficients is introduced. In the main result, the existence, uniqueness, and stability of the nonnegative weak solution to the equations of the form 
	$$
	-{\rm div}(\rho|\nabla u|^{q-2} \nabla u)-{\rm div}(|\nabla u|^{p-2}\nabla u)=\lambda b |u|^{q-2}u,~~p>q
	$$
	are proven. Additionally, an explicit formula that expresses solution of the equation through inverse optimal solution of the spectral problem 
	$$-{\rm div}(\rho|\nabla \phi|^{q-2}\nabla \phi)=\lambda b|\phi|^{q-2}\phi$$ 
	is presented. The advantage of the method is that the inverse optimal problem has a visible geometry and a simple variational structure, which makes it easy to solve it and, as a consequence,  find a solution to the associated nonlinear boundary value problem.

\end{abstract}

\maketitle

\section{Introduction}
The paper deals with the existence, uniqueness and stability of nonnegative weak solutions for the  following  nonlinear boundary value problem
\begin{equation} \label{eq:Nonl}
	\begin{cases}
		-{\rm div}(\rho |\nabla u|^{q-2}\nabla u)-{\rm div}(|\nabla u|^{p-2}\nabla u)=\lambda b |u|^{q-2}u,~~~ x\in \Omega,		\\ 
		~~u\bigr{|}_{\partial \Omega}=0.
	\end{cases}
\end{equation}
Hereafter, $\lambda \in \mathbb{R}$, $p=\frac{q \alpha}{\alpha-1}$, $q \geq 2, \alpha >  1$,  $\Omega$ is a bounded domain in $\mathbb{R}^N$, $N\geq 1$. We assume that   
\begin{align}
	\rho& \in L^1_{loc}(\Omega),~\rho>0,~\rho^{-s} \in L^1(\Omega)~~\mbox{with}~~s \in (\frac{N}{q}, +\infty)\cap [\frac{1}{q-1}, +\infty), \label{EqC}\\
	b& \in L^{\frac{\gamma}{\gamma-q}}, ~~meas\{x \in \Omega\mid~b(x) > 0\} > 0. \label{EqC2}
\end{align}
with some $\gamma$ satisfying $q < \gamma < q^*_s$, where 
$ \displaystyle{q^*_s=Nq s/(N(s+1)-q s)}$  if  $1\leq qs < N(s + 1)$, and $q^*_s$ is arbitrary, $1\leq q^*_s<+\infty$ if  $qs \geq N(s + 1)$. In what follows, we call $\rho$ the\textit{ density}. 


We denote by  $W^{1,q, \rho}:=W^{1,q}(\rho, \Omega)$ the weighted Sobolev space which is defined  as the set all functions $u \in L^q(\Omega)$ with finite norm
$$
\|u\|_{1,q, \rho}=\left(\int |u|^q \, dx +\int \rho |\nabla u|^q \, dx \right)^{1/q}<+\infty.
$$
$W^{1,q, \rho}_0:=W^{1,q}_0(\rho, \Omega)$ is a
closure of $C^\infty_0(\Omega)$ in $W^{1,q}(\rho, \Omega)$ with respect to the norm $\|\cdot\|_{1,q, \rho}$.  Due to \eqref{EqC},
the spaces $W^{1,q, \rho}$, $W^{1,q, \rho}_0$ are well-defined uniformly convex Banach spaces (see  Dr\'abek-Kufner-Nicolosi \cite{drabek}). 
Moreover, by the weighted Friedrichs inequality   the norm
$$
\|u\|_{W^{1,q, \rho}_0}=\left(\int \rho |\nabla u|^q \, dx \right)^{1/q}
$$
is equivalent to the norm $\|\cdot\|_{1,q, \rho}$ (see \cite{drabek}). 
We call  $u \in W^{1,q, \rho}_0\cap  W^{1,p}\setminus 0$ the weak solution of \eqref{eq:Nonl} if 
$$
\int\left(\rho ( |\nabla u|^{q-2}\nabla u, \nabla h)+( |\nabla u|^{p-2}\nabla u, \nabla h) -\lambda b|u|^{q-2}u h \right)\,dx=0
$$
is fulfilled for any $h \in C^\infty_0(\Omega)$.

In addition to problem \eqref{eq:Nonl}, special attention will be paid to the following spectral problem
\begin{equation} \label{eq:S}
	\begin{cases}	
		\mathcal{L}_\rho(\phi):=&-{\rm div}(\rho|\nabla \phi|^{q-2}\nabla \phi)=\lambda b|\phi|^{q-2}\phi,~~~ x\in \Omega, \\
		\phi \bigr{|}_{\partial \Omega}=&0.
	\end{cases}
\end{equation}
We call $\lambda(\rho)$ the eigenvalue of  $\mathcal{L}_\rho$	if there exists	$\phi(\rho) \in W^{1,q, \rho}_0\setminus 0$  calling an eigenfunction corresponding to $\lambda(\rho)$ such that
$$
\int\rho ( |\nabla \phi(\rho)|^{q-2}\nabla \phi(\rho), \nabla h)\,dx-\lambda(\rho) \int b|\phi(\rho)|^{q-2}\phi(\rho) h \,dx=0
$$
holds true for any $h\in  C^\infty_0(\Omega)$.
The principal eigenvalue of $\mathcal{L}_\rho$ can be found via the Courant–Fischer–Weyl variational principle
\begin{equation}\label{lambda1}
	\lambda_1(\rho)=\inf_{\phi \in C^\infty_0\setminus 0}\frac{\int \rho|\nabla \phi |^q dx}{\int b|\phi|^q\,dx},~~\rho \in L^1_{loc}(\Omega).
\end{equation}
Clearly, $\lambda_1(\rho)<+\infty$, $\forall \rho \in L^1_{loc}(\Omega)$, $\lambda_1(\rho)\geq 0$ if
$\rho(x)\geq 0$ a.e. in $\Omega$, moreover $\lambda_1(\rho)>0$ if
\eqref{EqC} is satisfied (see, e.g., \cite{drabek}). It is easy to check that $\lambda_1(\rho)=-\infty$ if and only if there exists a subset $\Omega^-:=\{x\in \Omega\mid \rho(x)<0\} \subset \Omega$ such that $meas(\Omega^-)>0$ and $\rho<0$  in $\Omega^-$.

Note that in the case $q=2$, if $\rho $ satisfies \eqref{EqC}, and (for simplicity) $b\equiv 1$, then by the First Representation Theorem (see \cite{Kato}, Theorem 2.1, p.322),  the quadratic form  $\int \rho \langle\nabla u,\nabla v \rangle\, dx$ generates a self-adjoint and semi-bounded operator $\mathcal{L}_\rho$ on $L^2(\Omega)$. Indeed, the applicability conditions of the theorem are satisfied since the closure of $C^\infty_0$ with respect to the norm $\|u\|_{1,2,\rho}:=(\int \rho |\nabla u|^2\, dx)^{1/2}$ generates the space $W_0^{1,2, \rho}$, while  $W_0^{1,2,\rho}$ is compactly embedded in $L^2(\Omega )$   (see, e.g., \cite{drabek},
p. 25).  Furthermore, the  spectrum of $\mathcal{L}_\rho$ consists of an infinite sequence of eigenvalues $\{\lambda_i(\rho) \}_{i=1}^{\infty}$,  each having finite multiplicity, and  $\inf_{i}\lambda_i(\rho) >0$ (see,  e.g., \cite{Kato}). In particular, it follows from this that the inverse problem of recovering the density $\rho(x)$ from a knowledge of the spectral data $\{\lambda_i(\rho) \}_{i=1}^{\infty}$ makes sense.

 The inverse problems is a classical and the most paramount problems in science and  applications (see, e.g.,  Chadan-Colton-P\"aiv\"arinta-Rundell \cite{chadan} and Gladwell \cite{glad}). In general, this work deals with the inverse problem of recovering the density function $\rho$ from a finite set of eigenvalues $\{\lambda_i \}_{i=1}^{m}$, $1\leq m<+\infty$. These type of problems, in a large, are ill-posed since having only finite spectral data, the inverse problem may possess infinitely many solutions (see, e.g., \cite{chadan, glad}). In \cite{ValIl_MatNot}, we proposed a new approach to such type problems, which makes their correct, and which is based on the conjecture that it is often possible to determine some a priory knowledge about the system. In particular,  for \eqref{eq:S}  one can assume that there exists  a priory function $\bar{\rho}$ which approximates the unknown density function $\rho_a$.
This naturally leads to the so-called \textit{inverse optimal problem} \cite{ValIl_JDE,ValIl_PhysD}: \textit{for a given $\bar{\rho}$ and a finite set of eigenvalues  $\{\lambda_i \}_{i=1}^{m}$, $1\leq m<+\infty$, find  $\hat{\rho}$ closest to $\bar{\rho}$, in  a certain norm, such that $\lambda_i(\hat{\rho})=\lambda_i$, $i=1,\ldots,m$}.  

In the present paper, we study the simplest version of this problem, which also makes sense for arbitrary $q\geq 2$:
\medskip

\par\noindent
$(P):$\,\,\textit{For a given $\lambda \in \mathbb{R}$ and $\bar{\rho} \in L^\alpha(\Omega)$, $\alpha>1$, $\bar{\rho}\geq 0$, find   a potential  $\hat{\rho} \in L^\alpha(\Omega)$ such that  $\lambda=\lambda_1(\hat{\rho})$ and }
\begin{equation}\label{Var}
	\|\bar{\rho}-\hat{\rho} \|_{L^\alpha}=\inf\{\|\bar{\rho}-\rho\|_{L^\alpha}:~~ \lambda=\lambda_1(\rho), ~\rho \geq 0,~\rho \in L^\alpha(\Omega)\}.
\end{equation}

\medskip

\noindent  Hereafter, we denote by 
$L^\alpha_s:=\{\rho \in L^\alpha:~\rho~~\mbox{satisfies}~~ \eqref{EqC}\}$ 
the subset equipped with the norm inherited from $L^\alpha$. Our  main results are as follows

\begin{theorem}\label{thm1}
	Assume that   $\bar{\rho} \in L^\alpha_s$ and $\lambda>\lambda_1(\bar{\rho})$. Then
	
	\begin{itemize}
		\item  there exists an unique  minimizer  $\hat{\rho} \in  L^\alpha_s$  of \eqref{Var}, moreover  $\lambda=\lambda_1(\hat{\rho})$, $\hat{\rho}\neq \bar{\rho}$,  and  $\hat{\rho}\geq \bar{\rho}>0$  in $\Omega$;
		\item   there exists a  nonnegative weak solution $\hat{u} \in W^{1,q, \rho}_0\cap  W^{1,p}\cap L^\infty$ of \eqref{eq:Nonl} with   $\rho=\bar{\rho}$. Furthermore, 
		\begin{align}\label{eq:MainEqual}
			&\hat{\rho}=\bar{\rho}+|\nabla \hat{u}|^{q/(q-1)}~~\mbox{a.e. in}~~ \Omega,\\
			&\lambda=\lambda_1(\hat{\rho}).
		\end{align}
	\end{itemize} 
\end{theorem}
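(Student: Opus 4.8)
\emph{Strategy.} The plan is to first construct $\hat u$ by a constrained variational problem, and then verify directly that $\hat\rho:=\bar\rho+|\nabla\hat u|^{q/(q-1)}$ solves $(P)$. The bridge between the two objects is the arithmetic that follows from $p=\tfrac{q\alpha}{\alpha-1}$: the identity $\tfrac{q}{q-1}+q-2=p-2$ (so that adding $|\nabla u|^{q/(q-1)}$ to the density turns the $q$-Laplacian exactly into the $p$-Laplacian) and the relation $q\cdot\tfrac{\alpha}{\alpha-1}=p$ (so that $\rho|\nabla u|^q\in L^1$ whenever $\rho\in L^\alpha$ and $\nabla u\in L^p$). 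These are used repeatedly below.

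\emph{Step 1 (construction of $\hat u$ and $\hat\rho$).} Work on the reflexive space $X:=W^{1,q,\bar\rho}_0\cap W^{1,p}_0$ and maximize $F(u):=\lambda\int b|u|^q\,dx-\int\bar\rho|\nabla u|^q\,dx$ over the weakly compact set $K:=\{u\in X:\int|\nabla u|^p\,dx\le 1\}$. Since $u\mapsto\int\bar\rho|\nabla u|^q$ is a power of a norm, hence weakly l.s.c., and $u\mapsto\int b|u|^q$ is weakly continuous on $X$ by \eqref{EqC2} and the compact embedding $W^{1,q,\bar\rho}_0\hookrightarrow\hookrightarrow L^\gamma$ (see \cite{drabek}), $F$ is weakly u.s.c. on $K$ and attains a maximum at some $\hat v$; because $\lambda>\lambda_1(\bar\rho)$ there is $\varphi\in C^\infty_0$ with $F(\varphi)>0$ after scaling into $K$, so $F(\hat v)>0$ and $\hat v\not\equiv0$, and, replacing $\hat v$ by $|\hat v|$, we take $\hat v\ge0$. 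From $F(t\hat v)=t^qF(\hat v)$ with $F(\hat v)>0$ the constraint is active, $\int|\nabla\hat v|^p=1$, and the Lagrange multiplier rule gives
\[
-{\rm div}(\bar\rho|\nabla\hat v|^{q-2}\nabla\hat v)-\tfrac{\mu p}{q}\,{\rm div}(|\nabla\hat v|^{p-2}\nabla\hat v)=\lambda b|\hat v|^{q-2}\hat v,
\]
with $\tfrac{\mu p}{q}=F(\hat v)>0$ (test with $\hat v$). Rescaling, $\hat u:=(\mu p/q)^{1/(p-q)}\hat v$ is a nonnegative, nonzero weak solution of \eqref{eq:Nonl} with $\rho=\bar\rho$, lying in $W^{1,q,\bar\rho}_0\cap W^{1,p}$; that $\hat u\in L^\infty$ follows from a Moser/De Giorgi iteration for the mixed $(q,p)$-operator against the weighted Sobolev inequalities governed by \eqref{EqC}. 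Set $\hat\rho:=\bar\rho+|\nabla\hat u|^{q/(q-1)}$. Then $\hat\rho\ge\bar\rho>0$ a.e., $\hat\rho\ne\bar\rho$ (as $\hat u\in W^{1,p}_0\setminus\{0\}$ forces $\nabla\hat u\not\equiv0$), $\hat\rho^{-s}\le\bar\rho^{-s}\in L^1$, and $|\nabla\hat u|^{q/(q-1)}\in L^\alpha$ since $\nabla\hat u\in L^p$, $\Omega$ is bounded and $\tfrac{q\alpha}{q-1}\le p$; hence $\hat\rho\in L^\alpha_s$.

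\emph{Steps 2--3 ($\lambda=\lambda_1(\hat\rho)$; optimality and uniqueness).} By $\tfrac{q}{q-1}+q-2=p-2$ we have $\hat\rho|\nabla\hat u|^{q-2}\nabla\hat u=\bar\rho|\nabla\hat u|^{q-2}\nabla\hat u+|\nabla\hat u|^{p-2}\nabla\hat u$, so the equation for $\hat u$ reads $\mathcal L_{\hat\rho}(\hat u)=\lambda b|\hat u|^{q-2}\hat u$; testing with $\hat u$ gives $\int\hat\rho|\nabla\hat u|^q=\lambda\int b|\hat u|^q$, whose left side is positive, hence (as $\lambda>0$) $\int b|\hat u|^q>0$ and $\lambda_1(\hat\rho)\le\lambda$. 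Since $\hat u\ge0$, $\hat u\not\equiv0$, the simplicity of the principal eigenvalue of $\mathcal L_{\hat\rho}$ (classical, via a Picone-type inequality) forces $\lambda=\lambda_1(\hat\rho)$, so $\hat\rho$ is admissible in $(P)$. Now let $\rho$ be any competitor: $\rho\ge0$, $\rho\in L^\alpha$, $\lambda_1(\rho)=\lambda$. Choosing $\varphi_n\in C^\infty_0$ with $\varphi_n\to\hat u$ in $W^{1,p}_0$, the bound $\int\rho|\nabla\varphi_n-\nabla\hat u|^q\le\|\rho\|_{L^\alpha}\|\nabla\varphi_n-\nabla\hat u\|_{L^{q\alpha/(\alpha-1)}}^q=\|\rho\|_{L^\alpha}\|\nabla\varphi_n-\nabla\hat u\|_{L^p}^q\to0$ shows $\varphi_n\to\hat u$ in $W^{1,q,\rho}_0$, so by \eqref{lambda1} $\int\rho|\nabla\hat u|^q\ge\lambda_1(\rho)\int b|\hat u|^q=\lambda\int b|\hat u|^q=\int\hat\rho|\nabla\hat u|^q$. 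Since $(\hat\rho-\bar\rho)^{\alpha-1}=|\nabla\hat u|^q$, convexity of $s\mapsto|s|^\alpha$ yields
\[
\|\bar\rho-\rho\|_{L^\alpha}^\alpha-\|\bar\rho-\hat\rho\|_{L^\alpha}^\alpha\ \ge\ \alpha\!\int(\hat\rho-\bar\rho)^{\alpha-1}(\rho-\hat\rho)\,dx\ =\ \alpha\!\int|\nabla\hat u|^q(\rho-\hat\rho)\,dx\ \ge\ 0,
\]
so $\hat\rho$ minimizes \eqref{Var}; if equality holds throughout, strict convexity of $|\cdot|^\alpha$ ($\alpha>1$) forces $\rho-\bar\rho=\hat\rho-\bar\rho$ a.e., i.e. $\rho=\hat\rho$, proving uniqueness. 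The relations \eqref{eq:MainEqual} are the definition of $\hat\rho$ and the conclusion of Step 2.

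\emph{Main obstacle.} The delicate point is the $L^\infty$ bound in Step 1: the Moser/De Giorgi scheme must be carried out for the degenerate mixed $(q,p)$-Laplacian with the possibly unbounded weight $\bar\rho$ and the merely integrable coefficient $b$, and conditions \eqref{EqC}--\eqref{EqC2} (the summability exponent $s$ of $\bar\rho^{-1}$ and the induced critical exponent $q^*_s$) are exactly the thresholds that make the iteration close. The only other nonroutine input is the simplicity of the first eigenvalue of $\mathcal L_{\hat\rho}$ used in Step 2; everything else (weak compactness, lower semicontinuity, the Lagrange multiplier rule, the convexity inequality) is standard.
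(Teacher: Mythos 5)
Your route is genuinely different from the paper's, and most of it is sound. The paper solves the optimization problem first: it minimizes $\|\rho-\bar\rho\|_\alpha^\alpha$ over $M_\lambda=\{\rho\in L^\alpha:\ \lambda\le\lambda_1(\rho)\}$, getting existence from weak compactness together with concavity and upper semicontinuity of $\lambda_1$ (Propositions \ref{prop1}, \ref{prop2}) and uniqueness from convexity, and only then derives the PDE via the Kuhn--Tucker rule and the eigenvalue-derivative formula of Lemma \ref{lem10}, which produces \eqref{eq:fin} and exhibits $\hat u$ as a multiple of $\phi_1(\hat\rho)$; all regularity of $\hat u$ (in particular $L^\infty$ and $\nabla\hat u\in L^p$) is then inherited from Dr\'abek's Lemma \ref{lem1} and from $\hat\rho-\bar\rho\in L^\alpha$, so no embedding theory for the mixed operator is ever invoked. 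You reverse the order: you build $\hat u$ by maximizing $F$ over the $p$-Dirichlet ball in $W^{1,q,\bar\rho}_0\cap W^{1,p}_0$, and then verify directly that $\hat\rho:=\bar\rho+|\nabla\hat u|^{p-q}$ is admissible and optimal. Your verification and uniqueness step (the pointwise convexity inequality for $|\cdot|^\alpha$ combined with $\int\rho|\nabla\hat u|^q\,dx\ge\lambda\int b|\hat u|^q\,dx$, obtained by smooth approximation of $\hat u$ in the $\rho$-weighted gradient norm) is correct and arguably cleaner than the Lagrange-multiplier argument; the price is that your existence step uses the compact embedding $W^{1,q,\bar\rho}_0\hookrightarrow\hookrightarrow L^\gamma$, which the paper's scheme deliberately avoids.

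There are two points to repair. First, the genuine gap: the claim $\hat u\in L^\infty$ is not proved — you defer it to a Moser/De Giorgi iteration for the mixed $(q,p)$-operator with unbounded weight $\bar\rho$ and $b$ only in $L^{\gamma/(\gamma-q)}$, and you yourself flag it as the main obstacle; carrying out such an iteration in this degenerate weighted setting is not routine and is exactly what the theorem is not entitled to assume. It is also unnecessary: your Step 2 already shows that $\hat u\ge0$ is an eigenfunction of $\mathcal L_{\hat\rho}$ with $\hat\rho$ satisfying \eqref{EqC}, and the weighted Picone argument together with simplicity (this is precisely what the paper's Appendix, Lemma \ref{lem:Picone}, and Lemma \ref{lem1} supply — it is not quite ``classical'' for unbounded degenerate weights, which is why the paper works in the Heinonen--Kilpel\"ainen--Martio framework) gives $\lambda=\lambda_1(\hat\rho)$ and $\hat u=k\,\phi_1(\hat\rho)$ with $\phi_1(\hat\rho)\in L^\infty$; boundedness then comes for free, with no iteration. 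Second, the exponents: with $p=\frac{q\alpha}{\alpha-1}$ the density increment consistent with your own algebra is $|\nabla\hat u|^{q/(\alpha-1)}=|\nabla\hat u|^{p-q}$, as in the paper's \eqref{eq:fin} and \eqref{eq:tilderho}; your identities $\frac{q}{q-1}+q-2=p-2$ and $\frac{q\alpha}{q-1}\le p$ hold only when $\alpha=q$, respectively $\alpha\le q$, whereas your later relation $(\hat\rho-\bar\rho)^{\alpha-1}=|\nabla\hat u|^q$ presupposes the exponent $q/(\alpha-1)$. This slip is inherited from the wording of the theorem itself, and your argument goes through verbatim once $q/(q-1)$ is replaced by $q/(\alpha-1)$ throughout.
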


\begin{theorem}\label{thm2}
	Assume that  $\bar{\rho} \in L^\alpha_s$. Then 
	
	$(1^o)$  \eqref{eq:Nonl} with $\rho=\bar{\rho}$  has no   nonnegative  weak solution if $\lambda\leq \lambda_1(\bar{\rho})$;

	$(2^o)$  \eqref{eq:Nonl}  with $\rho=\bar{\rho}$ and $\lambda>\lambda_1(\bar{\rho})$ has unique nonnegative weak solution in  $W^{1,q, \rho}_0\cap  W^{1,p}$.
\end{theorem}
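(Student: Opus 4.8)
The plan is to derive Theorem~\ref{thm2} from the variational machinery behind Theorem~\ref{thm1}, by establishing a precise correspondence between weak solutions of \eqref{eq:Nonl} with $\rho=\bar\rho$ and critical points (in particular, the minimizer) of the functional naturally attached to the nonlinear problem. First I would introduce the energy functional
\[
J_\lambda(u)=\frac{1}{q}\int\bar\rho\,|\nabla u|^q\,dx+\frac{1}{p}\int|\nabla u|^p\,dx-\frac{\lambda}{q}\int b\,|u|^q\,dx
\]
on the reflexive Banach space $X:=W^{1,q,\bar\rho}_0\cap W^{1,p}$, whose weak solutions of \eqref{eq:Nonl} are exactly the nonzero critical points. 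The compact embedding $W^{1,q,\bar\rho}_0\hookrightarrow L^\gamma$ coming from \eqref{EqC} (with $\gamma<q^*_s$) together with the integrability of $b$ from \eqref{EqC2} makes the term $\int b|u|^q$ weakly continuous on $X$; the two leading terms are convex, coercive in their respective norms, and weakly lower semicontinuous, so $J_\lambda$ is weakly lower semicontinuous on $X$. For the nonexistence part $(1^o)$, suppose $u\in X\setminus 0$ is a nonnegative weak solution; testing the equation with $h=u$ (justified by density and the membership $u\in X$) gives
\[
\int\bar\rho\,|\nabla u|^q\,dx+\int|\nabla u|^p\,dx=\lambda\int b\,|u|^q\,dx,
\]
whence $\lambda\int b|u|^q\ge\int\bar\rho|\nabla u|^q\ge\lambda_1(\bar\rho)\int b|u|^q$ by \eqref{lambda1}, and since $\int b|u|^q>0$ would be needed for $u\not\equiv0$ (otherwise the left-hand side forces $\nabla u\equiv0$ hence $u\equiv0$), we get $\lambda\ge\lambda_1(\bar\rho)$; ruling out equality requires the extra observation that if $\lambda=\lambda_1(\bar\rho)$ then $\int|\nabla u|^p=0$, so $u\equiv0$, a contradiction. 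This proves $(1^o)$.

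For $(2^o)$, with $\lambda>\lambda_1(\bar\rho)$ I would first show $\inf_X J_\lambda<0$: pick $\phi\in C^\infty_0$ with $\int\bar\rho|\nabla\phi|^q<\lambda\int b|\phi|^q$ (possible by the variational characterization \eqref{lambda1} of $\lambda_1(\bar\rho)$), and evaluate $J_\lambda(t\phi)$ as $t\to0^+$: the $q$-homogeneous part is $\frac{t^q}{q}\big(\int\bar\rho|\nabla\phi|^q-\lambda\int b|\phi|^q\big)<0$ and dominates the $\frac{t^p}{p}\int|\nabla\phi|^p$ term since $p>q$, so $J_\lambda(t\phi)<0$ for small $t>0$. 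Coercivity of $J_\lambda$ on $X$ follows from coercivity of each leading term (in the respective norms) and the subcritical estimate $\big|\lambda\int b|u|^q\big|\le C\|u\|^q_{L^\gamma}\le C'\|u\|_{W^{1,q,\bar\rho}_0}^q$, which for $q\ge2$ is controlled by a small multiple of the leading $q$-term plus a constant (Young's inequality), using that the nonlinearity is $q$-homogeneous — actually one must be slightly careful here: since the $\int b|u|^q$ term is exactly $q$-homogeneous and matches the order of the $\bar\rho$-term, coercivity must come from combining the Poincaré/Friedrichs constant being $\lambda_1(\bar\rho)$ with a splitting of $\lambda$ against the $W^{1,p}$ term; the cleanest route is to observe $J_\lambda(u)\ge\frac1p\int|\nabla u|^p-C$ on the set where $\int\bar\rho|\nabla u|^q\le(\lambda/\lambda_1(\bar\rho))\int b|u|^q$ is large, after an interpolation of $\|u\|_{L^\gamma}$ between $L^q$ and $W^{1,p}$. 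The minimizer $\hat u$ then exists by the direct method, is nonzero since $J_\lambda(\hat u)<0=J_\lambda(0)$, and after replacing $\hat u$ by $|\hat u|$ (which does not increase $J_\lambda$) may be taken nonnegative; the Euler–Lagrange equation gives that $\hat u$ is a nonnegative weak solution.

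Uniqueness is the point I expect to be the main obstacle, and it is the step where the structural identity $p=q\alpha/(\alpha-1)$ and the link to problem $(P)$ pay off. The direct approach — strict convexity of $J_\lambda$ — fails because the term $-\frac{\lambda}{q}\int b|u|^q$ is concave along rays and destroys global convexity; but $J_\lambda$ restricted to the cone of nonnegative functions can be shown to have a unique nontrivial critical point by a Díaz–Saá / hidden convexity argument: the change of variables $v=u^q$ renders $u\mapsto\int\bar\rho|\nabla u|^q$ and $u\mapsto\int|\nabla u|^p$ convex in $v$ while $\int b|u|^q=\int b\,v$ becomes linear in $v$, so that any two nonnegative weak solutions $u_1,u_2$ satisfy, upon testing the equation for $u_i$ with $(u_i^q-u_j^q)/u_i^{q-1}$ and subtracting, an inequality forcing $u_1\equiv c\,u_2$; homogeneity considerations ($p\ne q$) then pin down $c=1$. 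Alternatively, and perhaps more in the spirit of this paper, uniqueness follows from Theorem~\ref{thm1}: any nonnegative weak solution $\hat u$ of \eqref{eq:Nonl} produces via $\hat\rho:=\bar\rho+|\nabla\hat u|^{q/(q-1)}$ an admissible competitor for \eqref{Var} with $\lambda_1(\hat\rho)=\lambda$ (the reverse of the construction in Theorem~\ref{thm1}), and the uniqueness of the minimizer $\hat\rho$ established there, combined with the fact that $\hat u$ can be recovered from $\hat\rho$ and $\bar\rho$ via $|\nabla\hat u|^{q/(q-1)}=\hat\rho-\bar\rho$ together with the (unique, by the first part) solvability of the resulting first-order relation under the boundary condition, forces $\hat u$ to be unique. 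I would present the Díaz–Saá argument as the primary proof of uniqueness, since it is self-contained, and remark on the equivalence with $(P)$ afterwards.
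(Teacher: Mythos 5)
Your part $(1^o)$ is essentially the paper's argument (test with $u$, use \eqref{lambda1}, observe that $\int|\nabla u|^p=0$ would force $u\equiv 0$), and is fine. The problems are in $(2^o)$, where you depart from the paper and where the departure leaves real gaps. For existence, your coercivity argument for $J_\lambda$ does not close: since $\int b|u|^q$ is exactly $q$-homogeneous and $\lambda>\lambda_1(\bar\rho)$, the estimate $\lambda\int b|u|^q\le \lambda C\|u\|^q_{W^{1,q,\bar\rho}_0}$ cannot be absorbed into $\frac1q\int\bar\rho|\nabla u|^q$ by Young's inequality (you acknowledge this), and the proposed fix by interpolating $\|u\|_{L^\gamma}$ between $L^q$ and $W^{1,p}$ is only gestured at; carrying it out requires precisely the weighted Sobolev embedding/compactness machinery (and a normalization or contradiction argument splitting according to whether the Rayleigh quotient of $u_n$ is below $\lambda$) that the paper's method is designed to avoid. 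In the paper, existence in $(2^o)$ is not proved by minimizing $J_\lambda$ at all: it is inherited from Theorem \ref{thm1}, where the solution is $\hat u=\mu^{(\alpha-1)/q}\phi_1(\hat\rho)$ built from the minimizer of \eqref{Var}.

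For uniqueness, the D\'iaz--Sa\'a / hidden-convexity argument you propose as the primary proof needs inputs you have not supplied and which are nontrivial in this degenerate setting: testing with $(u_i^q-u_j^q)/u_i^{q-1}$ requires strict interior positivity of the solutions (a strong maximum principle or Harnack inequality for $-{\rm div}(\rho|\nabla\cdot|^{q-2}\nabla\cdot)$ with merely measurable, unbounded $\rho$), enough boundedness/regularity to make these quotients admissible test functions, and the convexity of $u\mapsto\int|\nabla u|^p$ along $q$-geodesics for $q<p$; none of this is justified here, and your conclusion ``$u_1\equiv c\,u_2$, then $c=1$ by homogeneity'' is not what that argument yields (when it works, the strict convexity of the non-$q$-homogeneous $p$-term gives $u_1=u_2$ directly). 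Your ``alternative'' sketch is the paper's actual route, but you assert its key step: that any nonnegative weak solution $u$ gives $\lambda_1(\bar\rho+|\nabla u|^{q/(\alpha-1)})=\lambda$. A priori one only gets $\lambda\ge\lambda_1(\tilde\rho)$ from \eqref{lambda1}; the reverse inequality is exactly the content of the Picone-type Lemma \ref{lem:Picone} in the Appendix, and it is what allows Lemma \ref{lem:Nonl} to show that $\tilde\rho$ is a local (hence, by strict convexity of $\|\cdot-\bar\rho\|_\alpha^\alpha$ and convexity of $M_\lambda$, the unique global) minimizer, forcing $\tilde\rho=\hat\rho$ and $|\nabla u|=|\nabla\hat u|$. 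Finally, your closing claim that $u$ is then recovered from the ``first-order relation'' $|\nabla u|^{q/(q-1)}=\hat\rho-\bar\rho$ is not a proof: knowing $|\nabla u|$ does not determine $u$; the paper instead notes that both solutions are principal eigenfunctions of $\mathcal{L}_{\hat\rho}$ with equal norms and invokes the uniqueness of the normalized eigenfunction from Lemma \ref{lem1}.
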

As we proceed, we will use the notations $\hat{\rho}(\lambda, \bar{\rho})$, $\hat{u}(\lambda, \bar{\rho})$ for $\hat{\rho}$, $\hat{u}$, when a function's dependence on $\lambda$ and $\bar{\rho}$ needs to be considered. For $\lambda>0$, we define the subset
$$
L^s_{\alpha,\lambda}:=\{\rho \in L^\alpha_s \mid \lambda_1(\rho)<\lambda \}
$$
equipped with the norm inherited from $L^\alpha$. 

Our last result is about the continuity of solutions $\hat{\rho}(\lambda, \bar{\rho})$, $\hat{u}(\lambda, \bar{\rho})$ with respect to $\lambda$ and $\bar{\rho}$, which, in particular, means their stability.

\begin{theorem}\label{thm3}
	
	$(1^o)$ \, For any $\lambda >0$, 
		the maps $\hat{\rho}(\lambda, \cdot):  L^s_{\alpha,\lambda} \to L^\alpha_s$ and  $\hat{u}(\lambda, \cdot):  L^s_{\alpha,\lambda} \to  W^{1,\alpha
		}_0$ are continuous. Moreover,  $\|\hat{u}(\lambda, \cdot)\|_{1,q, (\cdot)}, \|\nabla\hat{u}(\lambda, \cdot)\|_{p} \in C(L^s_{\alpha,\lambda})$. 
		
		$(2^o)$\, 
		For any $\bar{\rho} \in L^\alpha_s$, 
		the maps $\hat{\rho}(\cdot,\bar{\rho} ):   [\lambda_1(\bar{\rho}),+\infty) \to L^\alpha_s$ and  $\hat{u}(\cdot, \bar{\rho}):  [\lambda_1(\bar{\rho}),+\infty) \to W^{1,q}_0$ are continuous.  Moreover,  $ \|\nabla\hat{u}(\cdot,\bar{\rho})\|_{p} \in C[\lambda_1(\bar{\rho}),+\infty)$
	
\end{theorem}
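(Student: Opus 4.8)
The plan is to recast $\hat{\rho}(\lambda,\bar{\rho})$ as a metric projection and then exploit continuity of such projections. As in the proof of Theorem~\ref{thm1}, for $\lambda\ge\lambda_1(\bar\rho)$ the optimal density is the unique point of
$$C_\lambda:=\{\rho\in L^\alpha:\ \rho\ge 0,\ \lambda_1(\rho)\ge\lambda\}$$
nearest to $\bar\rho$ in $L^\alpha$, i.e.\ $\hat\rho(\lambda,\bar\rho)=P_{C_\lambda}\bar\rho$ (one checks from concavity of $\lambda_1$ that $P_{C_\lambda}\bar\rho$ already satisfies $\lambda_1(\cdot)=\lambda$ when $\lambda>\lambda_1(\bar\rho)$, while $P_{C_{\lambda_1(\bar\rho)}}\bar\rho=\bar\rho$). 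By \eqref{lambda1} the functional $\rho\mapsto\lambda_1(\rho)$ is concave and sequentially weakly upper semicontinuous on $L^\alpha$ (for $\phi\in C^\infty_0$ one has $|\nabla\phi|^q\in L^{\alpha/(\alpha-1)}$, so each $\rho\mapsto\int\rho|\nabla\phi|^q\,dx$ is weakly continuous), hence $C_\lambda$ is convex and closed in the space $L^\alpha$, which is uniformly convex since $\alpha>1$; consequently $P_{C_\lambda}$ is everywhere single-valued, and $\hat\rho(\lambda,\bar\rho)\in L^\alpha_s$ by Theorem~\ref{thm1}.

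For part $(1^o)$, I would fix $\lambda>0$, take $\bar\rho_n\to\bar\rho$ in $L^s_{\alpha,\lambda}$, and first use that the metric projection onto a fixed closed convex subset of a uniformly convex space is sequentially continuous: with $\hat\rho_n:=P_{C_\lambda}\bar\rho_n$, the sequence is bounded, any weak cluster point $w$ lies in $C_\lambda$ and obeys $\|\bar\rho-w\|_{L^\alpha}\le\liminf\|\bar\rho-\hat\rho_n\|_{L^\alpha}\le\|\bar\rho-P_{C_\lambda}\bar\rho\|_{L^\alpha}$, so $w=\hat\rho(\lambda,\bar\rho)$ by uniqueness; then $\hat\rho_n-\bar\rho_n\rightharpoonup w-\bar\rho$ with convergent norms forces strong convergence by uniform convexity, and a subsequence argument gives $\hat\rho(\lambda,\bar\rho_n)\to\hat\rho(\lambda,\bar\rho)$ in $L^\alpha$. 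For the solutions $\hat u_n:=\hat u(\lambda,\bar\rho_n)$, testing \eqref{eq:Nonl} (with $\rho=\bar\rho_n$) by $\hat u_n$ and discarding the nonnegative weighted term gives $\|\nabla\hat u_n\|_p^p\le\lambda\int b|\hat u_n|^q\,dx\le C\|\nabla\hat u_n\|_p^q$ by \eqref{EqC2}, the embedding $W^{1,p}_0\hookrightarrow L^\gamma$ and the Friedrichs inequality, so $(\hat u_n)$ is bounded in $W^{1,p}_0$ (as $p>q$). Passing to $\hat u_n\rightharpoonup v$ in $W^{1,p}_0$, $\hat u_n\to v$ in $L^\gamma$, and testing by $\hat u_n-v$: the right-hand side tends to $0$ by compactness; writing $\bar\rho_n=\bar\rho+(\bar\rho_n-\bar\rho)$, the remainder $\int(\bar\rho_n-\bar\rho)|\nabla\hat u_n|^{q-2}\nabla\hat u_n\cdot\nabla(\hat u_n-v)\,dx$ is $\le\|\bar\rho_n-\bar\rho\|_{L^\alpha}\|\nabla\hat u_n\|_p^{q-1}\|\nabla(\hat u_n-v)\|_p\to0$ by the identity $\tfrac1\alpha+\tfrac{q-1}p+\tfrac1p=1$ (equivalent to $p=\tfrac{q\alpha}{\alpha-1}$), and the $\nabla v$-terms vanish since $\bar\rho|\nabla v|^{q-2}\nabla v\in L^{p/(p-1)}$ by the same identity; hence
\[\int\bar\rho\bigl(|\nabla\hat u_n|^{q-2}\nabla\hat u_n-|\nabla v|^{q-2}\nabla v\bigr)\cdot\nabla(\hat u_n-v)\,dx+\int\bigl(|\nabla\hat u_n|^{p-2}\nabla\hat u_n-|\nabla v|^{p-2}\nabla v\bigr)\cdot\nabla(\hat u_n-v)\,dx\longrightarrow0,\]
and, both integrands being nonnegative and $p\ge2$, the monotonicity inequality for the $p$-Laplacian gives $\hat u_n\to v$ strongly in $W^{1,p}_0$. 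Next, by \eqref{eq:MainEqual}, $|\nabla\hat u_n|=(\hat\rho_n-\bar\rho_n)^{(q-1)/q}\to(\hat\rho(\lambda,\bar\rho)-\bar\rho)^{(q-1)/q}$ in $L^{\alpha q/(q-1)}$ (using $|s^\theta-t^\theta|\le|s-t|^\theta$), a nonzero limit because $\hat\rho(\lambda,\bar\rho)\ne\bar\rho$ (Theorem~\ref{thm1}); so $v\ne0$, and by Theorem~\ref{thm2}$(2^o)$, $v=\hat u(\lambda,\bar\rho)$. A subsequence argument then delivers $\hat u(\lambda,\bar\rho_n)\to\hat u(\lambda,\bar\rho)$ in $W^{1,p}_0$, hence in $W^{1,\alpha}_0$ (a Vitali argument using the higher integrability of $\nabla\hat u_n$ from \eqref{eq:MainEqual}), with $\|\nabla\hat u_n\|_p\to\|\nabla\hat u(\lambda,\bar\rho)\|_p$, and, testing \eqref{eq:Nonl} by $\hat u_n$ once more, $\int\bar\rho_n|\nabla\hat u_n|^q\,dx\to\int\bar\rho|\nabla\hat u(\lambda,\bar\rho)|^q\,dx$, which is the asserted continuity of the weighted norm.

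For part $(2^o)$, $\bar\rho$ is fixed and $C_\lambda$ varies, and the plan is to prove $C_\lambda\to C_{\lambda_0}$ in the Mosco sense as $\lambda\to\lambda_0\ge\lambda_1(\bar\rho)$ and deduce convergence of the projections. Using that $\lambda_1$ is positively $1$-homogeneous and monotone in $\rho$ (see \eqref{lambda1}): if $\rho_{n_k}\in C_{\lambda_{n_k}}$ and $\rho_{n_k}\rightharpoonup\rho$ in $L^\alpha$, then $\rho\ge0$ and $\lambda_1(\rho)\ge\limsup\lambda_1(\rho_{n_k})\ge\lambda_0$ by weak upper semicontinuity, so $\rho\in C_{\lambda_0}$; conversely, given $\rho\in C_{\lambda_0}$ (so $\lambda_1(\rho)\ge\lambda_0>0$), the sequence $t_n\rho$ with $t_n:=\max\{1,\lambda_n/\lambda_1(\rho)\}\to1$ belongs to $C_{\lambda_n}$ and converges to $\rho$. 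Then the projection argument of part $(1^o)$ applies — uniform boundedness of $\hat\rho_n:=P_{C_{\lambda_n}}\bar\rho$ via $C_{\lambda_0+1}\subset C_{\lambda_n}$ for large $n$, and $\limsup_n\operatorname{dist}(\bar\rho,C_{\lambda_n})\le\operatorname{dist}(\bar\rho,C_{\lambda_0})$ via the recovery sequence for $P_{C_{\lambda_0}}\bar\rho$ — giving $\hat\rho(\lambda_n,\bar\rho)\to\hat\rho(\lambda_0,\bar\rho)$ in $L^\alpha$. The passage to $\hat u(\lambda_n,\bar\rho)$ then runs as in part $(1^o)$, but with the weight $\bar\rho$ fixed, so no remainder arises and the $S_+$ step is the classical one: the bounded sequence $\hat u(\lambda_n,\bar\rho)$ converges strongly in $W^{1,p}_0$ to a nonnegative weak solution $v$ of \eqref{eq:Nonl} (with $\rho=\bar\rho$) for the parameter $\lambda_0$, and $v=\hat u(\lambda_0,\bar\rho)$: for $\lambda_0>\lambda_1(\bar\rho)$ by Theorem~\ref{thm2}$(2^o)$; for $\lambda_0=\lambda_1(\bar\rho)$ one has $\hat\rho(\lambda_0,\bar\rho)=\bar\rho$, hence $\hat u(\lambda_0,\bar\rho)=0$ by \eqref{eq:MainEqual}, and Theorem~\ref{thm2}$(1^o)$ forces $v=0$ as well. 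Since $W^{1,p}_0\hookrightarrow W^{1,q}_0$ and $\|\nabla\hat u(\lambda_n,\bar\rho)\|_p\to\|\nabla v\|_p$, the stated continuity follows, endpoint included.

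I expect the main difficulty to lie in two places. First, in part $(1^o)$ the passage to the limit in the doubly nonlinear equation must be done while the degenerate weight $\bar\rho_n$ itself varies in $L^\alpha$; this succeeds because of the exponent identity $\tfrac1\alpha+\tfrac qp=1$ built into $p=\tfrac{q\alpha}{\alpha-1}$, which controls the frozen-weight remainder by $\|\bar\rho_n-\bar\rho\|_{L^\alpha}$ and keeps $\bar\rho|\nabla v|^{q-2}\nabla v$ in the dual of $L^p$, so the monotonicity ($S_{+}$) machinery goes through; one must additionally exclude the trivial limit $v\equiv0$, which is where \eqref{eq:MainEqual} and $\hat\rho\ne\bar\rho$ are needed. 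Second, in part $(2^o)$ the feasible set $C_\lambda$ itself depends on the parameter, so one must establish its Mosco convergence and the stability of the $L^\alpha$-metric projection under it; the $1$-homogeneity and monotonicity of $\lambda_1$ coming from \eqref{lambda1} make the recovery sequences and the uniform bound transparent, and the degenerate endpoint $\lambda_0=\lambda_1(\bar\rho)$ is absorbed by the nonexistence statement of Theorem~\ref{thm2}$(1^o)$.
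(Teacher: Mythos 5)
Your treatment of the density map is essentially the paper's own argument in different clothing: for $(1^o)$ the paper runs a direct midpoint-convexity/uniform-convexity contradiction with the minimizing values $a_n=\operatorname{dist}(\bar\rho_n,M_\lambda)$, while you invoke (and re-prove) the standard stability of the metric projection onto a fixed closed convex set in a uniformly convex space; for $(2^o)$ the paper passes to monotone subsequences, uses the nesting of the sets $M_{\lambda_n}$ and the same $1$-homogeneity trick $t_n\hat\rho^0\in M_{\lambda_n}$, which is exactly the content of your Mosco-convergence packaging. The genuinely different part is the continuity of $\hat u$: the paper reads it off almost directly from the identity $|\nabla\hat u|=(\hat\rho-\bar\rho)^{(\alpha-1)/q}$ (continuity of $|\nabla\hat u|$ in $L^p$, then the equation to recover $L^q$-continuity of $\hat u$), whereas you run a full PDE limit passage: a priori bounds, an $S_+$/monotonicity argument with the frozen-weight remainder controlled by $\|\bar\rho_n-\bar\rho\|_\alpha$ through the exponent identity $\tfrac1\alpha+\tfrac qp=1$, exclusion of the trivial limit via the formula, and identification via the uniqueness in Theorem \ref{thm2}$(2^o)$. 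Your route is more robust (it yields strong convergence of the gradient vectors in $L^p$, not just of their moduli, and it handles the endpoint $\lambda_0=\lambda_1(\bar\rho)$ explicitly through Theorem \ref{thm2}$(1^o)$), at the price of re-introducing the Sobolev/Rellich machinery that the paper deliberately tries to avoid.

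Two caveats. First, your compactness and Poincar\'e/Sobolev steps are carried out in $W^{1,p}_0$, but the solutions are only known to lie in $W^{1,q,\rho}_0\cap W^{1,p}$, i.e.\ the zero boundary condition is available only in the weighted sense (equivalently in $W^{1,q_s}_0$ with $q_s=qs/(s+1)$); membership in $W^{1,p}_0$ needs a justification or a workaround (note that uniform constants for the weighted spaces are not available, since $\bar\rho_n\to\bar\rho$ in $L^\alpha$ gives no control of $\int\bar\rho_n^{-s}$). Second, the claimed upgrade to $W^{1,\alpha}_0$-continuity ``by a Vitali argument using higher integrability'' does not work in general: the exact integrability furnished by \eqref{eq:MainEqual} is $|\nabla\hat u|\in L^p$ with $p=q\alpha/(\alpha-1)$, and $p<\alpha$ whenever $\alpha>q+1$, so there is no integrability above $L^p$ to exploit. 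The paper's own proof only establishes continuity into $W^{1,q}_0$ (your argument gives the stronger $W^{1,p}_0$), so the ``$W^{1,\alpha}_0$'' in the statement appears to be a misprint rather than something either argument actually reaches; also note the exponent in \eqref{eq:MainEqual} used in the paper's proofs is $q/(\alpha-1)$, not the $q/(q-1)$ you copied from the statement, though this does not affect your scheme.
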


Proofs of the theorems are based on the development of the method introduced in \cite{ValIl_MatNot}. The feature of the method is that the main results are obtained within the framework of the variational problem \eqref{Var} (see  below its equivalent form \eqref{MinP}), that is  
the existence, uniqueness, and stability of the nonnegative weak solutions of nonlinear boundary value problem \eqref{eq:Nonl} follow from the corresponding results to variational problem  \eqref{Var}. It is important to note that in \eqref{Var}  the minimizer is sought in the  class of  Lebesgue spaces $L^\alpha$, which actually simplifies the problem, whereas this minimizer ultimately resides in the subset $L^\alpha_s$. As a result, the solution $\hat{u} \in W^{1,q, \rho}_0\cap  W^{1,p}\cap L^\infty$ to equation \eqref{eq:Nonl} is found, by and large, without applying the theory of Sobolev spaces, in particular, the Sobolev and Rellich–Kondrachov embedding theorems.



Equations of the type \eqref{eq:Nonl}  which involve bounded measurable coefficients have been being actively studied nowadays, see, e.g., survey of Papageorgiou \cite{Papageor}. We point out  works of  Colasuonno-Squassina \cite{Colasuonno}, Motreanu-Tanaka \cite{Motreanu},  Papageorgiou-Pudelko-Radulescu \cite{Papageorgiou},  Papageorgiou-Vetro-Vetro \cite{PapageorVV}, and Papageorgiou-Repovš-Vetro \cite{PapageorgiouVetro}  addresses the solvability of  equations involving  $(p,q)$ - Laplace with  bounded nonnegative coefficient  $\rho$, including the case when  $\rho(x)=0$ on some subset of $\Omega$.  However, we did not find any studies that dealt with equations involving $(p,q)$-Laplace with unbounded coefficients. Uniqueness of a positive solution for equations involving $(p,q)$-Laplace  with constant coefficient $\rho\equiv c$ and bounded function $b$ admitting sign-change has been obtained in work of Tanaka \cite{Tanaka}.  Notice  the constant coefficient $\rho\equiv c$ satisfies conditions \eqref{EqC}. To the best of our knowledge, there are no results on the uniqueness of a non-negative solution to equations involving $(p,q)$-Laplace with measurable unbounded coefficients. We are also not familiar with results obtained for equations with $(p,q)$-Laplace similar to those in Theorem \ref{thm3}.

Boundary value problems of type \eqref{eq:Nonl} involving $(p,q)$-Laplace  operators with measurable coefficients arise not only in the study of inverse problems.  The celebrated work of Zhikov \cite{Zikov} was the first to study such problems within the framework of nonlinear theory of elasticity to model anisotropic materials, in which the density coefficient $\rho(x)$ determines the geometry of a composite made from  different materials. We mention also that the $(p,q)$-Laplace operators with measurable coefficients can be  applied to describe steady-state solutions of reaction-diffusion systems in biophysics  and in chemical reaction design (see, e.g., Cherfils–Il'yasov \cite{Cherfils}).

This paper is organized as follows.
Section 2 contains some preliminaries. In Sections 3-5, we prove Theorems \ref{thm1}-\ref{thm3}. Appendix A contains an auxiliary lemma.

\section{Preliminaries}
In what follows, $\|\cdot\|_r$ denotes the norm in $ L^r(\Omega)$, $r \in [1,+\infty]$.   For a functional $f:L^\alpha \to \mathbb{R}$, $u, h \in L^\alpha$,  $\partial_\varepsilon^+f(u)(h)$,  $\displaystyle{\frac{d}{d \epsilon}} f(u)(h)$ denote the right derivative and derivative of $f(u+\varepsilon h)$ at $\varepsilon=0$, respectively.
Define
$$
R(\rho,v):=\frac{\int \rho|\nabla v |^q dx}{\int b |v|^q\,dx},~~~v \in C^\infty_0(\Omega),~\rho \in L^1_{loc}(\Omega).
$$

\begin{proposition}\label{prop1}
	$\lambda_1(\rho)$ is a concave functional in $L^1_{loc}$, that is 
	\begin{equation}\label{eq:concav}
		\lambda_1(t \rho_1+(1-t) \rho_2)\geq t\lambda_1(\rho_1)+(1-t)\lambda_1( \rho_2)~~\forall \rho_1,\rho_2 \in L^1_{loc},~~\forall t \in [0,1].
	\end{equation} 
\end{proposition}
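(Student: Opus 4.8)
The plan is to read off the concavity directly from the variational formula \eqref{lambda1}. The key observation is that, for each fixed test function $\phi$, the Rayleigh-type quotient $\rho \mapsto R(\rho,\phi)=\bigl(\int b|\phi|^q\,dx\bigr)^{-1}\int \rho|\nabla\phi|^q\,dx$ is an \emph{affine} — in fact linear — functional of $\rho \in L^1_{loc}(\Omega)$: the numerator is the integral of $\rho$ against the fixed weight $|\nabla\phi|^q$, which lies in $L^\infty$ and has compact support (so the integral is finite for every $\rho\in L^1_{loc}$), and the denominator is a constant. Since $\lambda_1(\cdot)$ is the pointwise infimum of the family $\{R(\cdot,\phi)\}_\phi$ of such affine functionals, and a pointwise infimum of affine (hence concave) functionals is concave, the proposition follows. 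Note that the index set in the infimum \eqref{lambda1} — those $\phi\in C^\infty_0\setminus 0$ for which the quotient is defined — depends only on $b$, not on $\rho$, so the same family of functionals is used for $\rho_1$, $\rho_2$ and $t\rho_1+(1-t)\rho_2$.

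Concretely, I would fix $\rho_1,\rho_2\in L^1_{loc}(\Omega)$ and $t\in[0,1]$. If $t\in\{0,1\}$ the inequality \eqref{eq:concav} is trivial, so assume $t\in(0,1)$. If $\lambda_1(\rho_1)=-\infty$ or $\lambda_1(\rho_2)=-\infty$, the right-hand side of \eqref{eq:concav} equals $-\infty$ and there is nothing to prove (recall $\lambda_1(\rho)<+\infty$ for every $\rho$). Otherwise both values are finite and, for every admissible $\phi$, $R(\rho_i,\phi)\ge\lambda_1(\rho_i)$ for $i=1,2$ by definition of $\lambda_1$. Using linearity of $R(\cdot,\phi)$,
\[
R\bigl(t\rho_1+(1-t)\rho_2,\phi\bigr)=tR(\rho_1,\phi)+(1-t)R(\rho_2,\phi)\ge t\lambda_1(\rho_1)+(1-t)\lambda_1(\rho_2).
\]
Taking the infimum over all admissible $\phi$ on the left-hand side yields $\lambda_1\bigl(t\rho_1+(1-t)\rho_2\bigr)\ge t\lambda_1(\rho_1)+(1-t)\lambda_1(\rho_2)$, which is exactly \eqref{eq:concav}.

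There is essentially no hard step here; the argument is the classical "infimum of affine functionals is concave" principle. The only points needing care are bookkeeping: verifying that $\int\rho|\nabla\phi|^q\,dx$ is finite for every $\rho\in L^1_{loc}$ and $\phi\in C^\infty_0$ (so $R(\cdot,\phi)$ is a genuine real-valued linear functional), handling the degenerate cases $t\in\{0,1\}$ and $\lambda_1(\rho_i)=-\infty$ with the convention of extended-real arithmetic, and recording that the index set of the infimum in \eqref{lambda1} is independent of $\rho$. If one prefers not to invoke extended reals at all, one can instead prove concavity of the restriction of $\lambda_1$ to $L^\alpha_s$ (where $\lambda_1>0$ is finite), which is the only setting in which the proposition is subsequently used.
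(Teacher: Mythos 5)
Your proof is correct and follows essentially the same route as the paper: both read concavity off the variational formula \eqref{lambda1} as a pointwise infimum over a $\rho$-independent family of functionals that are linear in $\rho$ (the paper normalizes the denominator, you keep the quotient, which is an immaterial difference). Your explicit handling of the degenerate cases $t\in\{0,1\}$ and $\lambda_1(\rho_i)=-\infty$ is slightly more careful than the paper's remark that those cases are straightforward.
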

\begin{proof} We show  \eqref{eq:concav} only for $\rho_1,\rho_2 \in L^1_{loc}$ such that $\lambda_1(\rho_1),  \lambda_1(\rho_2)>-\infty$, since for the rest  cases the proof is straightforward.  By \eqref{lambda1} we have
	\begin{align*}
		\lambda_1(t \rho_1+&(1-t) \rho_2)=\inf_{\phi \in C^\infty_0\setminus 0:\| b |\phi|^q \|_1=1}	\int(t \rho_1+(1-t) \rho_2)|\nabla \phi|^q\, dx \geq \\
		& t\inf_{\phi \in C^\infty_0\setminus 0:\| b |\phi|^q \|_1=1}\int\rho_1|\nabla \phi|^q\, dx +(1-t)\inf_{\phi \in C^\infty_0\setminus 0:\| b |\phi|^q \|_1=1} \int\rho_2|\nabla \phi|^q\, dx= \\ \\
		&t\lambda_1( \rho_1) +(1-t)\lambda(\rho_2),~~\forall t \in [0,1].
	\end{align*}
\end{proof}
\begin{proposition}\label{prop2}
	$\lambda_1(\rho)$ is a upper semicontinuous functional in $L^1_{loc}$.
\end{proposition}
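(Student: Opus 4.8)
The plan is to prove upper semicontinuity of $\lambda_1(\cdot)$ on $L^1_{loc}$ by exploiting that $\lambda_1$ is, by \eqref{lambda1}, an infimum of a family of functionals that are \emph{continuous} in $\rho$. Specifically, for each fixed test function $\phi \in C^\infty_0(\Omega)\setminus 0$ with $\int b|\phi|^q\,dx \neq 0$, the map
$$
\rho \longmapsto \frac{\int \rho |\nabla \phi|^q\,dx}{\int b|\phi|^q\,dx}
$$
is an affine (hence continuous) functional of $\rho$ with respect to $L^1_{loc}$ convergence, because $|\nabla \phi|^q$ is bounded with compact support, so if $\rho_n \to \rho$ in $L^1_{loc}$ then $\int \rho_n |\nabla \phi|^q\,dx \to \int \rho |\nabla \phi|^q\,dx$. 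Thus $\lambda_1$ is a pointwise infimum of a family of continuous functionals, and the infimum of any family of continuous (indeed, upper semicontinuous) functions is upper semicontinuous. This is the structural heart of the argument.

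Concretely, I would argue as follows. Fix $\rho \in L^1_{loc}$ and a sequence $\rho_n \to \rho$ in $L^1_{loc}$; we must show $\limsup_{n\to\infty}\lambda_1(\rho_n) \leq \lambda_1(\rho)$. If $\lambda_1(\rho) = +\infty$ there is nothing to prove, so assume $\lambda_1(\rho) < +\infty$. Let $\varepsilon > 0$ be arbitrary. By the variational characterization \eqref{lambda1} there exists $\phi_\varepsilon \in C^\infty_0(\Omega)\setminus 0$ with $\int b|\phi_\varepsilon|^q\,dx > 0$ (note that by \eqref{EqC2} the admissible set of such test functions with positive denominator is nonempty, and one may take the denominator positive by restricting to functions supported where $b>0$) such that
$$
R(\rho,\phi_\varepsilon) = \frac{\int \rho |\nabla \phi_\varepsilon|^q\,dx}{\int b|\phi_\varepsilon|^q\,dx} \leq \lambda_1(\rho) + \varepsilon.
$$
Since $\phi_\varepsilon$ has compact support $K \subset \Omega$ and $|\nabla \phi_\varepsilon|^q \in L^\infty$, we get
$$
\left| \int (\rho_n - \rho)|\nabla \phi_\varepsilon|^q\,dx \right| \leq \|\nabla \phi_\varepsilon\|_\infty^q \int_K |\rho_n - \rho|\,dx \longrightarrow 0,
$$
hence $R(\rho_n,\phi_\varepsilon) \to R(\rho,\phi_\varepsilon)$ as $n\to\infty$. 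Therefore
$$
\lambda_1(\rho_n) \leq R(\rho_n,\phi_\varepsilon) \leq \lambda_1(\rho) + 2\varepsilon
$$
for all $n$ sufficiently large, which gives $\limsup_{n\to\infty}\lambda_1(\rho_n) \leq \lambda_1(\rho) + 2\varepsilon$. Letting $\varepsilon \to 0$ completes the proof.

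The only mild subtlety — what I would call the ``main obstacle,'' though it is minor — is ensuring the near-minimizing test function $\phi_\varepsilon$ can be chosen with strictly positive denominator $\int b |\phi_\varepsilon|^q\,dx$, so that the Rayleigh quotient is well-defined and the affine-continuity argument applies; this is guaranteed by assumption \eqref{EqC2}, which gives $\mathrm{meas}\{b>0\}>0$, allowing test functions concentrated on that set. A secondary point worth a remark is that $L^1_{loc}$ convergence must be interpreted correctly (convergence in $L^1(K)$ for every compact $K \Subset \Omega$), but since test functions have compact support this is exactly what is needed and causes no difficulty. No compactness, no Sobolev embedding, and no sign restriction on $\rho$ is needed here — the statement holds on all of $L^1_{loc}$, consistent with the earlier observation that $\lambda_1$ may equal $-\infty$ on densities that are negative on a set of positive measure.
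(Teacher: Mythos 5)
Your proposal is correct and rests on exactly the same key fact as the paper's proof: for each fixed test function $\phi$ the Rayleigh quotient $R(\cdot,\phi)$ is affine, hence continuous, in $\rho$ with respect to $L^1_{loc}$ convergence, so $\lambda_1$ is a pointwise infimum of continuous functionals and therefore upper semicontinuous — you simply run the sequential $\varepsilon$-near-minimizer version (legitimate since $L^1_{loc}$ is metrizable), whereas the paper phrases the same argument via open sublevel sets over a countable dense family in $C^\infty_0$. The only cosmetic omission is the case $\lambda_1(\rho)=-\infty$, where one replaces $\lambda_1(\rho)+\varepsilon$ by an arbitrary $-M$ and the identical argument gives $\limsup_n \lambda_1(\rho_n)\leq -M$ for every $M$.
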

\begin{proof} Let $(\phi_n)_{n=1}^\infty \subset C^\infty_0(\Omega)$ be a 
	countable dense set in $ C^\infty_0(\Omega)$. Clearly, 
	$$
	\lambda_1(\rho)=\inf_{n\ge 1}R(\rho,\phi_n), ~~\rho \in L^1_{loc}.
	$$
	Hence,	for any ${\displaystyle \tau \in \mathbb{R} }$, 
	$$
	\{\rho\in L^1_{loc}\mid \lambda_1(\rho)< \tau\}=\bigcup_{n=1}^{\infty}\{\rho\in L^1_{loc} \mid R(\rho,\phi_n)< \tau\}.
	$$
	It is easily seen that  $R(\cdot,\phi_n) \in C(L^1_{loc})$, $n=1,\ldots $. Hence the set $\{\rho\in L^1_{loc} \mid R(\rho,\phi_n)< \tau\}$ is open in $L^1_{loc}$, and therefore, $\{\rho\in L^1_{loc}\mid \lambda_1(\rho)< \tau\}$ is open for any ${\displaystyle \tau \in \mathbb{R} }$. This means that $\lambda_1(\rho)$ is  a upper semicontinuous functional.
	
\end{proof}
We shall employ the following result of Dr\'abek \cite{Drabek1}
\begin{lemma}\label{lem1}
	Assume that \eqref{EqC}, \eqref{EqC2} are fulfilled. Then 
	$\lambda_1(\rho)$ defined by \eqref{lambda1} is a  eigenvalue of $\mathcal{L}_{\rho}$, moreover the corresponding eigenfunction $\phi_1(\rho)$ belongs $W^{1,q, \rho}_0\cap L^\infty$ and  $\phi_1(\rho)\geq 0$ in $\Omega$.   Furthermore, there is no other eigenfunctions corresponding 	$\lambda_1(\rho)$ such that $\|\phi\|_{q}=1$.
\end{lemma}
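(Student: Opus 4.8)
The plan is to obtain $\lambda_1(\rho)$ together with an eigenfunction by the direct method of the calculus of variations, to upgrade the minimizer to an $L^\infty$-function by a Moser iteration, and to establish simplicity by a Picone-type argument; throughout I would lean on the functional-analytic theory of the weighted spaces $W^{1,q,\rho}_0$ from Dr\'abek--Kufner--Nicolosi \cite{drabek} and Dr\'abek \cite{Drabek1}. First I would pass from the infimum over $C^\infty_0\setminus 0$ in \eqref{lambda1} to the same infimum over $W^{1,q,\rho}_0\setminus 0$, using density of $C^\infty_0$ and continuity of $\phi\mapsto\int\rho|\nabla\phi|^q\,dx$ and $\phi\mapsto\int b|\phi|^q\,dx$ on that space; the latter rests on the structural consequence of \eqref{EqC}, namely the weighted Sobolev inequality $\|w\|_{q^*_s}\le C\,\|\rho^{1/q}\nabla w\|_q$ and the ensuing \emph{compact} embedding $W^{1,q,\rho}_0\hookrightarrow L^\gamma(\Omega)$ for $q<\gamma<q^*_s$ (see \cite{drabek}). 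Taking a minimizing sequence $(\phi_n)$ normalized by $\int b|\phi_n|^q\,dx=1$, the weighted Friedrichs inequality gives $\lambda_1(\rho)>0$ and hence boundedness in $W^{1,q,\rho}_0$; passing to a subsequence with $\phi_n\rightharpoonup\phi_1$ weakly in $W^{1,q,\rho}_0$ and $\phi_n\to\phi_1$ in $L^\gamma(\Omega)$, H\"older together with $b\in L^{\gamma/(\gamma-q)}$ yields $\int b|\phi_1|^q\,dx=1$ (so $\phi_1\ne 0$), while convexity of $w\mapsto\int\rho|\nabla w|^q\,dx$ gives weak lower semicontinuity, whence $\phi_1$ is a minimizer.

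Next, the Lagrange multiplier rule applied to the $C^1$ functionals $\int\rho|\nabla w|^q\,dx$ and $\int b|w|^q\,dx$ produces $\mu\in\mathbb{R}$ with $\int\rho|\nabla\phi_1|^{q-2}\nabla\phi_1\cdot\nabla h\,dx=\mu\int b|\phi_1|^{q-2}\phi_1 h\,dx$ for all $h$; testing with $h=\phi_1$ gives $\mu=\lambda_1(\rho)$, so $\lambda_1(\rho)$ is an eigenvalue. Replacing $\phi_1$ by $|\phi_1|$ changes neither quotient in \eqref{lambda1} since $|\nabla|\phi_1||=|\nabla\phi_1|$ a.e., so I may take $\phi_1\ge 0$. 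To get $\phi_1\in L^\infty(\Omega)$ I would run a Moser iteration adapted to the weight: test the eigenvalue equation with truncated powers $h=\phi_1\min(\phi_1,M)^{q(\beta-1)}$, drop the favorable term, and use the weighted Sobolev inequality on the left and $b\in L^{\gamma/(\gamma-q)}$ on the right to obtain a recursion along exponents $\beta_k\to\infty$ whose constants have a convergent product --- this is precisely the estimate carried out in \cite{Drabek1}, which I would cite rather than reproduce.

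For simplicity --- the main obstacle --- I would first show $\phi_1>0$ in $\Omega$: as a nonnegative weak solution of $\mathcal{L}_\rho\phi_1=\lambda_1(\rho) b|\phi_1|^{q-2}\phi_1$, it satisfies a weak Harnack inequality / strong maximum principle for the weighted $q$-Laplacian, and the role of \eqref{EqC} (in particular $\rho^{-s}\in L^1$, which places $\rho$ in a suitable Muckenhoupt-type class) is exactly to make such estimates available; I would invoke the version in \cite{Drabek1}. Then, given any eigenfunction $\psi$ for $\lambda_1(\rho)$, the same reflection trick and Harnack let me take $\psi>0$, and I would apply the weighted Picone inequality
$$
\rho\,|\nabla\psi|^{q}\;\ge\;\rho\,|\nabla\phi_1|^{q-2}\,\nabla\phi_1\cdot\nabla\bigl(\psi^{q}/\phi_1^{q-1}\bigr)\quad\text{a.e. in }\Omega ,
$$
with equality only where $\psi/\phi_1$ is constant. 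Inserting the admissible test function $h=\psi^{q}/\phi_1^{q-1}$ (admissibility uses $\phi_1,\psi\in L^\infty$ and interior positivity, with a routine truncation near $\partial\Omega$) into the equation for $\phi_1$ and comparing with the Rayleigh quotient of $\psi$, both sides equal $\lambda_1(\rho)\int b\psi^q\,dx$, so the integral of the nonnegative Picone excess vanishes; this forces $\nabla(\psi/\phi_1)=0$ a.e., hence $\psi=c\,\phi_1$, and with $\|\psi\|_q=\|\phi_1\|_q=1$ we get $\psi=\pm\phi_1$. I expect the genuinely hard points to be the two places where the unboundedness of the weight bites --- the weighted Sobolev/compactness statement and the Harnack inequality --- both of which are already packaged in \cite{drabek,Drabek1}; in a fully self-contained proof the remaining real work is checking that the test functions in the Moser iteration and in the Picone step actually belong to $W^{1,q,\rho}_0$.
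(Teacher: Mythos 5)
The paper offers no proof of Lemma \ref{lem1} at all: it is imported verbatim as a result of Dr\'abek \cite{Drabek1} (``We shall employ the following result of Dr\'abek''), so there is no internal argument to compare against line by line. Your sketch is essentially a reconstruction of the proof in that cited source and in \cite{drabek}: direct method with the compact embedding $W^{1,q,\rho}_0\hookrightarrow L^\gamma$ furnished by \eqref{EqC}, Lagrange multipliers, Moser iteration for $L^\infty$, and a Picone identity for uniqueness of the normalized eigenfunction. The outline is sound, with two points worth making explicit. First, the only genuinely delicate ingredient is the one you flag in passing: strict interior positivity (weak Harnack / strong maximum principle) for the weighted $q$-Laplacian is not a free consequence of \eqref{EqC} --- the condition $\rho^{-s}\in L^1$ does not by itself place $\rho$ in a Muckenhoupt class, and even the paper only asserts $q$-admissibility in the sense of \cite{Heinonen} in its Appendix A without proof; so this step really does rest on the cited literature rather than on a ``routine'' verification. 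Second, the positivity requirement can be avoided altogether by running the Picone comparison with the regularized quotient $\psi^q/(\phi_1+\epsilon)^{q-1}$ and letting $\epsilon\to 0$ by dominated convergence (using only $\phi_1\in L^\infty$), which is exactly the device the paper itself uses in Appendix A (Lemma \ref{lem:Picone}); adopting it would make your simplicity argument self-contained, modulo the standard remark that a sign-changing first eigenfunction is excluded by testing with $\psi^{\pm}$ (each of which would then itself be a minimizer), since ``$|\psi|>0$ a.e.\ implies one sign'' is not immediate without continuity. With those two repairs your route and the source's coincide in substance.
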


Let us prove

\begin{lemma}\label{lem10} Assume that $\rho\in L^1_{loc}$,  $h\in C^\infty_0$, and $ h \geq 0$ in $\Omega$.  
	\begin{description}
		\item[(1)] If $\rho\geq 0$ in $\Omega$, then the map $[0,1)\ni \varepsilon \mapsto \lambda_1(\rho+\varepsilon h)$ admits a  right derivative at $\varepsilon=0$ such that $\partial_\varepsilon^+\lambda_1(\rho)(h)\geq 0$.  Moreover,   $\exists h_0\in C^\infty_0$, $h_0\geq 0$ such that $\partial_\varepsilon^+\lambda_1(\rho)(h_0)> 0$.
		\item[(2)] If $\rho$ satisfies \eqref{EqC}, and \eqref{EqC2} is fulfilled, then  the map $ \varepsilon \mapsto \lambda_1(\rho+\varepsilon h)$ is a differentiable function in a neighborhood of $0$, moreover
		\begin{equation}\label{eq:Val}
			\frac{d}{d\varepsilon}\lambda_1(\rho)=\frac{1}{\int b |\phi(\rho)|^q \, dx}\int |\nabla \phi_1(\rho)|^q h\, dx.
		\end{equation}
	\end{description}
\end{lemma}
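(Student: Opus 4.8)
The plan is to treat the two parts separately. Part (1) is ``soft'': it follows from concavity and monotonicity of $\lambda_1$ alone. Part (2) is a quantitative eigenvalue perturbation formula, and its proof reduces to the continuity of the principal eigenfunction under perturbations of the density; that continuity, in the presence of the degenerate weight, is the only nontrivial point.

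For part (1), put $g(\varepsilon):=\lambda_1(\rho+\varepsilon h)$. Since $\varepsilon\mapsto\rho+\varepsilon h$ is affine and $\lambda_1$ is concave on $L^1_{loc}$ (Proposition \ref{prop1}), $g$ is concave on $[0,1)$, hence admits a right derivative $\partial_\varepsilon^+\lambda_1(\rho)(h)\in[-\infty,+\infty]$ at $\varepsilon=0$, equal to $\sup_{\varepsilon>0}(g(\varepsilon)-g(0))/\varepsilon$. As $h\geq0$, for every $\phi\in C^\infty_0$ with $\int b|\phi|^q>0$ the Rayleigh quotient in \eqref{lambda1} is nondecreasing in $\rho$, so $\lambda_1$ is monotone and $g$ nondecreasing; therefore $\partial_\varepsilon^+\lambda_1(\rho)(h)\geq0$. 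For the existence of $h_0$, assuming \eqref{EqC} (so that Lemma \ref{lem1} applies), the normalized principal eigenfunction $\phi_1(\rho)$ has zero trace but is nonzero, hence $\int_B|\nabla\phi_1(\rho)|^q\,dx>0$ for some ball $B\Subset\Omega$. Take $h_0\in C^\infty_0(\Omega)$ with $h_0\geq0$ and $\{h_0>0\}=B$. If $\lambda_1(\rho+h_0)=\lambda_1(\rho)$, then the principal eigenfunction $\psi:=\phi_1(\rho+h_0)$ with $\int b|\psi|^q=1$ satisfies $0\leq\int h_0|\nabla\psi|^q\,dx=\lambda_1(\rho+h_0)-\int\rho|\nabla\psi|^q\,dx\leq0$, so $\nabla\psi=0$ a.e.\ on $B$ and $\int\rho|\nabla\psi|^q=\lambda_1(\rho)$, i.e.\ $\psi$ is also a minimizer for $\lambda_1(\rho)$ (note $W^{1,q,\rho+h_0}_0\subseteq W^{1,q,\rho}_0$); by the uniqueness in Lemma \ref{lem1}, $\psi$ is a nonzero multiple of $\phi_1(\rho)$, contradicting $\int_B|\nabla\phi_1(\rho)|^q>0$. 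Hence $\lambda_1(\rho+h_0)>\lambda_1(\rho)$, and likewise $\lambda_1(\rho+h_0/2)>\lambda_1(\rho)$, so by concavity $\partial_\varepsilon^+\lambda_1(\rho)(h_0)\geq2(\lambda_1(\rho+h_0/2)-\lambda_1(\rho))>0$. (The same test-function comparison with $\phi_1(\rho)$ also gives $\partial_\varepsilon^+\lambda_1(\rho)(h)\leq\int h|\nabla\phi_1(\rho)|^q/\int b|\phi_1(\rho)|^q<\infty$.)

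For part (2), note first that $\rho_\varepsilon:=\rho+\varepsilon h$ satisfies \eqref{EqC} for every $\varepsilon\geq0$: indeed $\rho_\varepsilon\in L^1_{loc}$, $\rho_\varepsilon\geq\rho>0$, and $\rho_\varepsilon^{-s}\leq\rho^{-s}\in L^1(\Omega)$. Hence, by Lemma \ref{lem1}, there is a unique nonnegative principal eigenfunction $\phi_\varepsilon:=\phi_1(\rho_\varepsilon)$, which I normalize by $\int b|\phi_\varepsilon|^q\,dx=1$. The Hölder estimate
\[
\int_E|\nabla\phi_\varepsilon|^q\,dx\;\leq\;\Big(\int_E\rho_\varepsilon|\nabla\phi_\varepsilon|^q\,dx\Big)^{\frac{s}{s+1}}\Big(\int_E\rho_\varepsilon^{-s}\,dx\Big)^{\frac1{s+1}},\qquad E\subseteq\Omega,
\]
together with $\rho_\varepsilon^{-s}\leq\rho^{-s}$ and the boundedness of $h$, shows $\phi_\varepsilon\in W^{1,q}_0(\Omega)\cap W^{1,q,\rho_{\varepsilon'}}_0$ for every $\varepsilon'\geq0$, so $\phi_\varepsilon$ is admissible in $R(\rho_{\varepsilon'},\cdot)$. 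Testing $R(\rho_\varepsilon,\cdot)$ with $\phi_{\varepsilon_0}$ and $R(\rho_{\varepsilon_0},\cdot)$ with $\phi_\varepsilon$ then yields, for all $\varepsilon,\varepsilon_0\geq0$,
\[
(\varepsilon-\varepsilon_0)\int h|\nabla\phi_\varepsilon|^q\,dx\;\leq\;\lambda_1(\rho_\varepsilon)-\lambda_1(\rho_{\varepsilon_0})\;\leq\;(\varepsilon-\varepsilon_0)\int h|\nabla\phi_{\varepsilon_0}|^q\,dx.
\]
Dividing by $\varepsilon-\varepsilon_0$, the asserted differentiability of $\varepsilon\mapsto\lambda_1(\rho_\varepsilon)$ (one-sided at $0$, two-sided for $\varepsilon_0>0$), with derivative $\int h|\nabla\phi_{\varepsilon_0}|^q/\int b|\phi_{\varepsilon_0}|^q$ at $\varepsilon_0$ — which at $\varepsilon_0=0$ is exactly \eqref{eq:Val} — follows once $\varepsilon\mapsto\int h|\nabla\phi_\varepsilon|^q\,dx$ is shown to be continuous. (For $\varepsilon<0$ the density $\rho+\varepsilon h$ need not be admissible — it is when $\rho$ is bounded below on $\{h>0\}$, in which case the same argument runs on a two-sided interval — so one works on $[0,\infty)$, which contains a one-sided neighbourhood of $0$.)

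The crux is this continuity of $\varepsilon\mapsto\phi_\varepsilon$, which I would obtain by compactness. The displayed Hölder estimate with $E=\Omega$ and the continuity of $\varepsilon\mapsto\lambda_1(\rho_\varepsilon)$ (Propositions \ref{prop1}, \ref{prop2}) bound $\{\phi_\varepsilon\}$ uniformly in $W^{1,q}_0(\Omega)$ on compact $\varepsilon$-intervals. Along any $\varepsilon_n\to\varepsilon_0$, extract $\phi_{\varepsilon_n}\rightharpoonup\psi$ weakly in $W^{1,q}_0(\Omega)$ and strongly in $L^\gamma(\Omega)$ (Rellich--Kondrachov, since $q<\gamma<q^*_s$); using that $\rho_{\varepsilon_n}\to\rho_{\varepsilon_0}$ uniformly, $b\in L^{\gamma/(\gamma-q)}$, and $u\mapsto\int\rho_{\varepsilon_0}|\nabla u|^q$ is weakly lower semicontinuous, one passes to the limit in the Rayleigh quotient to see that $\psi$ is a nonnegative normalized principal eigenfunction of $\mathcal{L}_{\rho_{\varepsilon_0}}$, hence $\psi=\phi_{\varepsilon_0}$ by the uniqueness in Lemma \ref{lem1}; so $\phi_\varepsilon\rightharpoonup\phi_{\varepsilon_0}$. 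Passing to the limit in $\int\rho_\varepsilon|\nabla\phi_\varepsilon|^q=\lambda_1(\rho_\varepsilon)$ gives $\int\rho_{\varepsilon_0}|\nabla\phi_\varepsilon|^q\to\int\rho_{\varepsilon_0}|\nabla\phi_{\varepsilon_0}|^q$, so by uniform convexity of the weighted $L^q$ space $\rho_{\varepsilon_0}^{1/q}\nabla\phi_\varepsilon\to\rho_{\varepsilon_0}^{1/q}\nabla\phi_{\varepsilon_0}$ in $L^q(\Omega)$; hence $\nabla\phi_\varepsilon\to\nabla\phi_{\varepsilon_0}$ in measure, while the Hölder estimate over sets $E$ of small measure (via $\rho^{-s}\in L^1$) makes $\{|\nabla\phi_\varepsilon|^q\}$ uniformly integrable, so $|\nabla\phi_\varepsilon|^q\to|\nabla\phi_{\varepsilon_0}|^q$ in $L^1(\Omega)$ by Vitali's theorem, and since $h$ is bounded, $\int h|\nabla\phi_\varepsilon|^q\to\int h|\nabla\phi_{\varepsilon_0}|^q$. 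The main obstacle is exactly this step: the degenerate, possibly unbounded weight $\rho$ provides no uniform coercivity, so each passage between the weighted and the plain $L^q$ norms — the uniform bound, the weak-to-strong upgrade, and the uniform integrability — must be routed through the integrability hypothesis $\rho^{-s}\in L^1$.
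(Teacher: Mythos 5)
Your part (1) argument for the existence and nonnegativity of the right derivative (concavity plus monotonicity of the Rayleigh quotient in $\rho$) matches the paper's, but your proof of the ``Moreover'' clause has a genuine gap relative to the statement: you invoke Lemma \ref{lem1} and the uniqueness of the principal eigenfunction, which require \eqref{EqC}, whereas part (1) assumes only $\rho\in L^1_{loc}$, $\rho\geq 0$. This is not a cosmetic point: in the proof of Theorem \ref{thm1} the clause is used for $\hat{\rho}$ (to rule out $\mu_1=0$) \emph{before} it is known that $\hat{\rho}\geq\bar{\rho}$, i.e.\ before \eqref{EqC} is available for $\hat{\rho}$, so your weaker version would not plug in there. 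The paper instead picks $h_0$ with $\lambda_1(h_0)>0$ and uses the superadditivity bound $\lambda_1(\rho+\varepsilon h_0)\geq\lambda_1(\rho)+\varepsilon\lambda_1(h_0)$ coming from \eqref{lambda1}, which needs no structural hypothesis on $\rho$ at all.

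In part (2) the central quantitative step is wrong: the displayed H\"older estimate $\int_E|\nabla\phi_\varepsilon|^q\,dx\leq\bigl(\int_E\rho_\varepsilon|\nabla\phi_\varepsilon|^q\,dx\bigr)^{s/(s+1)}\bigl(\int_E\rho_\varepsilon^{-s}\,dx\bigr)^{1/(s+1)}$ is false (test it with $\rho_\varepsilon\equiv1$); the exponents only match if the left-hand integrand is $|\nabla\phi_\varepsilon|^{q_s}$ with $q_s:=qs/(s+1)<q$, which is exactly Dr\'abek's embedding $W^{1,q,\rho}_0\hookrightarrow W^{1,q_s}_0$. With the corrected estimate you still get weak compactness and identification of the limit eigenfunction (in $W^{1,q_s}_0$, with Rellich--Kondrachov into $L^\gamma$, $\gamma<q^*_s$), but you lose precisely the items your argument hinges on: the uniform bound of $\phi_\varepsilon$ in $W^{1,q}_0$, the admissibility statement $\phi_\varepsilon\in W^{1,q,\rho_{\varepsilon'}}_0$ for $\varepsilon'>\varepsilon$ (equivalently the a priori finiteness of $\int h|\nabla\phi_\varepsilon|^q$, needed already to write your two-sided sandwich), and above all the uniform integrability of $|\nabla\phi_\varepsilon|^q$ fed into Vitali's theorem. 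The energy bound $\int\rho_\varepsilon|\nabla\phi_\varepsilon|^q=\lambda_1(\rho_\varepsilon)$ together with $\rho^{-s}\in L^1$ controls only the $q_s$-power of the gradient, and the strong convergence of $\rho^{1/q}\nabla\phi_\varepsilon$ in $L^q$ does not by itself upgrade to convergence of $\int h|\nabla\phi_\varepsilon|^q$, since $h/\rho$ is unbounded. So the continuity of $\varepsilon\mapsto\int h|\nabla\phi_\varepsilon|^q$ --- which you yourself identify as the crux --- is not established, and the proof of \eqref{eq:Val} does not close. For comparison, the paper does not attempt eigenfunction continuity at all: it only sandwiches $\lambda_1(\rho+\varepsilon h)$ between $\lambda_1(\rho)\pm\varepsilon C$ with $C=\int h|\nabla\phi_1(\rho)|^q/\int b|\phi_1(\rho)|^q$, using $\phi_1(\rho)$ as the sole test function; your Danskin-type scheme is the natural way to get the exact derivative (the paper's bound alone only yields $0\leq\partial^+_\varepsilon\lambda_1\leq C$), but as written the decisive step rests on an invalid inequality.
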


\begin{proof} Let $h\in C^\infty_0$ such that $h \geq 0$ in $\Omega$. Note that if $p\geq 0$ in $\Omega$, then $\lambda_1(\rho)\geq 0$.  By Proposition \ref{prop1} the map $[0,1)\ni \varepsilon \mapsto \lambda_1(\rho+\varepsilon h)$ is concave function, and therefore, it is continuous and admits right derivative at $\varepsilon=0$.

	Observe
	\begin{align}
		\lambda_1(\rho+\varepsilon h) = &\inf_{\phi \in C^\infty_0\setminus 0}\left(\frac{\int \rho|\nabla \phi |^q dx}{\int b |\phi|^q\,dx}+\varepsilon\frac{\int h(x)|\nabla \phi |^q dx}{\int b |\phi|^q\,dx} \right)\geq \nonumber\\
		&\inf_{\phi \in C^\infty_0\setminus 0}\frac{\int \rho|\nabla \phi |^q dx}{\int b |\phi|^q\,dx}+\varepsilon\inf_{\phi \in C^\infty_0\setminus 0}\frac{\int h(x)|\nabla \phi |^q dx}{\int b |\phi|^q\,dx} = \lambda_1(\rho)+\varepsilon\lambda_1(h).\label{eq:lem2}
	\end{align}
	Hence, $\lambda_1(\rho+\varepsilon h)-\lambda_1(\rho)\geq  \varepsilon\lambda_1(h)$ which implies that  $\partial^+_\varepsilon\lambda_1(\rho)(h)\geq  0$. Clearly, there exists $h_0\in C^\infty_0$, $h_0>0$ such that $\lambda_1(h_0)>0$, and therefore by \eqref{eq:lem2} we have $\partial_\varepsilon^+\lambda_1(\rho)(h_0)\geq \lambda_1(h_0)> 0$.

	Assume  that \eqref{EqC} is satisfied. Then by Lemma \ref{lem1} there exists a unique nonnegative eigenfunction  $\phi_1(\rho) \in W^{1,q,\rho}_0$ of  $\mathcal{L}_{\rho}$ such that $\|\phi\|_{q}=1$. Since $\phi_1(\rho)$ is a minimizer of \eqref{lambda1},  we have
	\begin{align*}
		\lambda_1(p+\varepsilon h) =\inf_{\phi \in C^\infty_0\setminus 0}&\left(\frac{\int \rho|\nabla \phi |^q dx}{\int b |\phi|^q\,dx}+\varepsilon\frac{\int h(x)|\nabla \phi |^q dx}{\int b |\phi|^q\,dx} \right)\leq \\
		&\lambda_1(\rho)+ \varepsilon\frac{1}{\int b |\phi(\rho)|^q \, dx}\int |\nabla \phi_1(\rho)|^q h\, dx.
	\end{align*}
	 for sufficiently small $\varepsilon >0$.
	Thus, if $\varepsilon >0$ is sufficiently small, then  
	\begin{equation}\label{1ineq}
		\lambda_1(p+\varepsilon h)-\lambda_1(\rho)\leq  \varepsilon\frac{1}{\int b |\phi(\rho)|^q \, dx}\int |\nabla \phi_1(\rho)|^q h\, dx.
	\end{equation}
	On the other hand,
	\begin{align*}
		\lambda_1(p+\varepsilon h) =\inf_{\phi \in C^\infty_0\setminus 0} &\left(\frac{\int \rho|\nabla \phi |^q dx}{\int b |\phi|^q\,dx}+\varepsilon\frac{\int h(x)|\nabla \phi |^q dx}{\int b |\phi|^q\,dx} \right)\geq\\
		&\lambda_1(\rho)-\varepsilon\frac{\int h(x)|\nabla \phi_1(\rho) |^q dx}{\int b |\phi_1(\rho)|^q\,dx}, 
	\end{align*}
	for sufficiently small $\varepsilon >0$.
	Thus by \eqref{1ineq} we have 
	\begin{align*}
		-\varepsilon\frac{1}{\int b |\phi(\rho)|^q \, dx}\int |\nabla \phi_1(\rho)|^q h\, dx\leq \lambda_1(p+\varepsilon h)-\lambda_1(\rho)\leq \varepsilon\frac{1}{\int b |\phi(\rho)|^q \, dx}\int |\nabla \phi_1(\rho)|^q h\, dx,	 
	\end{align*}
	for sufficiently small $\varepsilon >0$, which implies  \eqref{eq:Val} and proof of \textbf{(2)}.
	
\end{proof}

\section{Proof of Theorem \ref{thm1}}

We prove $(1^o)$ and $(2^o)$ jointly. Let $\bar{\rho} \in L^\alpha_s(\Omega)$  and $\lambda>\lambda_1(\bar{\rho})$. Consider the constrained minimization problem
\begin{equation}\label{MinP}
	\hat{\theta}=\min\{\|\rho-\bar{\rho}\|_\alpha^\alpha:\rho \in M_{\lambda}\},
\end{equation}
where 
$M_{\lambda}:=\{\rho \in L^\alpha:~~\lambda\leq \lambda_1(\rho)\}$. 
Observe $M_{\lambda} \neq \emptyset$. Indeed, take $\rho \in L^\alpha$ such that $\lambda_1(\rho)>0$. Then  for any $a\geq\lambda$, we have $a=\lambda_1(\frac{a \cdot \rho}{\lambda_1(\rho)})$, $\lambda_1(\frac{a \cdot \rho}{\lambda_1(\rho)})\geq\lambda$, and thus,  $\frac{\lambda \cdot \rho}{\lambda_1(\rho)} \in M_{\lambda}$. 

Let $(\rho_n) \subset M_\lambda$ be a minimizing sequence of \eqref{MinP}, i.e., $\|\rho_n-\bar{\rho}\|_\alpha^\alpha \to \hat{\theta}$ as $n\to +\infty$. Then $(\|p_n\|^q_q)$ is bounded, therefore, the reflexivity
of $L^\alpha$ yields that there exists a subsequence, denoting again by $(\rho_n)$, such that
$\rho_n \rightharpoondown \hat{\rho}$ weakly in $L^\alpha$ for some $\hat{\rho} \in L^\alpha$. 

By Propositions \ref{prop1},  \ref{prop2}, the functional $\lambda_1(\rho)$ is concave and upper semicontinuous on $L^\alpha$, and therefore, it is weakly upper semicontinuous. Hence,   
$$
\lambda\leq \limsup_{n\to +\infty} \lambda_1(\rho_n)\leq \lambda_1(\hat{\rho}).
$$
Thus,  $\hat{\rho} \in
M_\lambda$, and consequently $\hat{\rho} \neq 0$. On the other hand, by the weak lower semi-continuity of $\|\rho-\bar{\rho}\|_\alpha^\alpha$ we have 
$$
\|\hat{\rho}-\bar{\rho}\|_\alpha^\alpha\leq \liminf_{n\to +\infty}\|\rho_n-\bar{\rho}\|_\alpha^\alpha=\hat{\theta}.
$$
Since $\hat{\rho} \in
M_\lambda$, this implies that $\|\hat{\rho}-\bar{\rho}\|_\alpha^\alpha=\hat{\theta}$, that is $\hat{\rho}$ is a minimizer of \eqref{MinP}. Recalling that $ \lambda_1(\hat{\rho})\geq \lambda > \lambda_1(\bar{\rho})$ we conclude that $\hat{\rho} \neq \bar{\rho}$. Taking into account that $\|(\cdot)-\bar{\rho}\|_\alpha^\alpha$ for $q>1$ is a convex functional and by Proposition \ref{prop1}, $M_\lambda$ is a convex set we infer that the minimizer $\hat{\rho}$ is unique.

The Kuhn–Tucker Theorem  (see, e.g.,   Zeidler \cite{ Zeidler},  Theorem 47.E, p. 394) yields that there exist $\mu_1,\mu_2 \in \mathbb{R}$, $|\mu_1|+|\mu_2|\neq 0$ such that the Lagrange function 
$$
\Lambda(\rho):=\mu_1 \|\rho-\bar{\rho}\|_\alpha^\alpha +\mu_2 (\lambda- \lambda_1(\rho)),
$$
satisfies the following conditions: 
\begin{align}
	&\Lambda(\hat{\rho})=\min_{\rho\in L^\alpha}\Lambda(\rho) \label{eq1}\\
	& \mu_2 (\lambda- \lambda_1(\hat{\rho}))=0,\label{eq2}\\
	&\mu_1, \mu_2 \geq 0.\label{eq3}
\end{align}
In view of that $\lambda_1(\hat{\rho})>0$, we have  $\hat{\rho}\geq 0$, and therefore, Lemma \ref{lem10} implies that
there exists $\partial_\varepsilon^+\lambda_1(\hat{\rho})(h) \geq 0$, $\forall h \in C^\infty_0$, ~$h \geq 0$. This by \eqref{eq1} yields
\begin{equation}\label{eq:ner}
	-\mu_1 \alpha \int(\bar{\rho}-\hat{\rho})|\bar{\rho}-\hat{\rho}|^{\alpha-2}h\, dx-\mu_2\partial_\varepsilon^+\lambda_1(\hat{\rho})(h)\geq 0, ~~\forall h \in C^\infty_0, ~h \geq 0.
\end{equation}
Suppose that $\mu_1=0$. Then $\mu_2 \neq 0$. By Lemma \ref{lem10}, there exists $h_0 \in C^\infty_0$ such that $\partial_\varepsilon^+\lambda_1(\hat{\rho})(h_0)> 0$. Thus by \eqref{eq:ner} we obtain
$$
-\mu_2\partial_\varepsilon^+\lambda_1(\hat{\rho})(h_0)\geq 0,
$$
which implies a contradiction since $-\mu_2 < 0$. In the same manner, it is derived that $\mu_2 \neq 0$, which by  \eqref{eq2} implies that $\lambda_1(\hat{\rho})=\lambda$.

Observe that \eqref{eq:ner} yields  
$$
-\mu_1 \alpha \int(\bar{\rho}-\hat{\rho})|\bar{\rho}-\hat{\rho}|^{\alpha-2}h\, dx\geq  0, ~~\forall h \in C^\infty_0, ~h \geq 0,
$$
and hence,
$$
\mu_1(\hat{\rho}-\bar{\rho})|\bar{\rho}-\hat{\rho}|^{\alpha-2}\geq 0~~\mbox{a.e. in}~~\Omega,
$$
which implies that $\hat{\rho}\geq \bar{\rho}>0$ a.e. in $\Omega$ and consequently 
$
0<\hat{\rho}^{-s}\leq \bar{\rho}^{-s}~~\Rightarrow~~ \hat{\rho}^{-s} \in L^1 $
with  $s \in (\frac{N}{q}, +\infty)\cap [\frac{1}{q-1}, +\infty)$. As a result,  $\hat{\rho} \in  L^\alpha_s$. 
Hence, Lemma \ref{lem10} yields that  $ \varepsilon \mapsto \Lambda(\hat{\rho}+\varepsilon h)$ is a differentiable function in a neighborhood of $0$. Thus,  \eqref{eq1} yields $\displaystyle{\frac{d}{d\varepsilon}\Lambda(\hat{\rho}+\varepsilon h)|_{\varepsilon=0}=0}$, and therefore, in virtue of  \eqref{eq:Val}, we have
$$
-\mu_1 \alpha \int (\bar{\rho}-\hat{\rho})|\bar{\rho}-\hat{\rho}|^{\alpha-2}h\, dx-\mu_2\frac{1}{\int b |\phi_1(\hat{\rho})|^q \, dx}\int |\nabla \phi_1(\hat{\rho})|^q h\, dx= 0,~~\forall h \in C^\infty_0,
$$
where by Lemma \ref{lem1}, $\phi_1(\hat{\rho}) \in W^{1,q, \hat{\rho}}_0\cap L^\infty$. Hence, 
\begin{equation}\label{eq:fin}
	\hat{\rho}=\bar{\rho}+\mu |\nabla \phi_1(\hat{\rho})|^{q/(\alpha-1)}~~\mbox{a.e. in}~~\Omega,
\end{equation}
where $\mu:=(\mu_2/(\alpha \mu_1\int b |\phi(\hat{\rho})|^q \, dx))^{1/(\alpha-1)}>0$. Substituting \eqref{eq:fin} into \eqref{eq:S} we obtain
$$
- {\rm div}(\bar{\rho}|\nabla \phi_1(\hat{\rho})|^{q-2} \nabla \phi_1(\hat{\rho}))-\mu {\rm div}(|\nabla \phi_1(\hat{\rho})|^{p-2} \nabla \phi_1(\hat{\rho}))=\lambda b |\phi_1(\hat{\rho})|^{q-2}\phi_1(\hat{\rho}),
$$
where $p=\frac{q \alpha}{\alpha-1}>q$. Note that $\mu |\nabla \phi_1(\hat{\rho})|^{q/(\alpha-1)}=\hat{\rho}-\bar{\rho} \in L^\alpha$, and therefore, $|\nabla \phi_1(\hat{\rho})| \in  L^p(\Omega)$. Furthermore, the inequality  $\hat{\rho}
\geq \bar{\rho}>0$ yields $ W^{1,q, \hat{\rho}}_0 \hookrightarrow  W^{1,q, \bar{\rho}}_0$. 
Hence,   $\hat{u}:=\mu^\frac{\alpha-1}{q}\phi_1(\hat{\rho})$ weakly satisfies  \eqref{eq:Nonl} and  $\hat{u} \in W^{1,q, \bar{\rho}}_0\cap L^\infty$, $|\nabla \hat{u} | \in  L^p(\Omega)$.  Since $\hat{u}\in L^\infty$ and $|\nabla \hat{u} | \in  L^p(\Omega)$, we have $\hat{u} \in W^{1,p}$. Equality  \eqref{eq:fin} implies that $\hat{\rho}=\bar{\rho}+|\nabla \hat{u}|^{q/(q-1)}$ a.e. in $\Omega$. 

\section{Proof of Theorem \ref{thm2}}

Suppose, contrary to our claim, that for $\lambda\leq \lambda_1(\bar{\rho})$, \eqref{eq:Nonl} with $\rho=\bar{\rho}$  has a nonnegative  weak solution $u \in W^{1,q, \rho}_0 $. Testing equation \eqref{eq:Nonl} with $u$ and dividing by $\int b |u|^q\, dx$, we obtain 
$$
\frac{\int \bar{\rho}|\nabla u |^q dx}{\int b |u|^q\,dx}+\frac{\int |\nabla u |^p dx}{\int b |u|^q\,dx}=\lambda.
$$
By  \eqref{lambda1} we have
$$
\lambda_1(\bar{\rho})+\frac{\int |\nabla u |^p dx}{\int b |u|^q\,dx}\leq \lambda.
$$
which implies a contradiction.


To prove $(2^o)$, we need the following
\begin{lemma}\label{lem:Nonl}
	Let $u \in W^{1,q, \rho}_0\cap  W^{1,p}$ be a nonnegative weak solution of \eqref{eq:Nonl}. Then  $\tilde{\rho}:=\bar{\rho}+|\nabla u|^{q/(q-1)}$ is a local minimum point of $\|\rho-\bar{\rho}\|^q_q$  on $M_{\lambda}$. 
\end{lemma}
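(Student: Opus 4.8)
The plan is to show that $\tilde\rho$ belongs to $M_{\lambda}$ and satisfies there the same first--order conditions as the global minimizer $\hat\rho$ of \eqref{MinP} in Theorem \ref{thm1}; since $\|\cdot-\bar\rho\|_{\alpha}^{\alpha}$ is convex and $M_{\lambda}$ is convex, this makes $\tilde\rho$ a global, hence local, minimum. Throughout I write $\tilde\rho-\bar\rho=|\nabla u|^{p-q}$, with $p-q=q/(\alpha-1)$.

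First I would record the reduction to an eigenvalue problem. Because $|\nabla u|\in L^{p}$ and $p/(p-q)=\alpha$, we have $|\nabla u|^{p-q}\in L^{\alpha}$; being nonnegative, it gives $\tilde\rho\ge\bar\rho>0$, hence $\tilde\rho^{-s}\le\bar\rho^{-s}\in L^{1}$ and $\tilde\rho\in L^{\alpha}_{s}$. The pointwise identity $|\nabla u|^{p-2}\nabla u=(\tilde\rho-\bar\rho)|\nabla u|^{q-2}\nabla u$ turns the weak formulation of \eqref{eq:Nonl} into that of \eqref{eq:S} with density $\tilde\rho$ and parameter $\lambda$, so $u\ge 0$ is a weak eigenfunction of $\mathcal{L}_{\tilde\rho}$ for $\lambda$. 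Since the principal eigenvalue is the only eigenvalue of $\mathcal{L}_{\tilde\rho}$ admitting a nonnegative eigenfunction (Lemma \ref{lem1}, cf.~\cite{Drabek1}), it follows that $\lambda=\lambda_1(\tilde\rho)$; by the simplicity in Lemma \ref{lem1}, $u=\|u\|_{q}\phi_1(\tilde\rho)$, and since $\tilde\rho\in L^{\alpha}_{s}$ we also get $\lambda=\lambda_1(\tilde\rho)>0$, $\int b|u|^{q}\,dx>0$ and $R(\tilde\rho,u)=\lambda$. In particular $\tilde\rho\in M_{\lambda}$.

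Next I would reduce local minimality to one scalar inequality. The functional $J(\rho):=\|\rho-\bar\rho\|_{\alpha}^{\alpha}$ from \eqref{MinP} is convex on $L^{\alpha}$ and, since $\alpha>1$, Gateaux differentiable at $\tilde\rho$ with
$$
J'(\tilde\rho)(v)=\alpha\int(\tilde\rho-\bar\rho)^{\alpha-1}v\,dx=\alpha\int|\nabla u|^{(p-q)(\alpha-1)}v\,dx=\alpha\int|\nabla u|^{q}\,v\,dx,
$$
using $(p-q)(\alpha-1)=q$ and $|\nabla u|^{q}\in L^{p/q}=L^{\alpha'}$, so $J'(\tilde\rho)$ is a bounded linear functional. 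By convexity, $J(\rho)\ge J(\tilde\rho)+\alpha\int|\nabla u|^{q}(\rho-\tilde\rho)\,dx$ for every $\rho\in L^{\alpha}$; hence it is enough to prove $\int|\nabla u|^{q}(\rho-\tilde\rho)\,dx\ge 0$ for all $\rho\in M_{\lambda}$, which in fact yields global minimality. To obtain this I would use that $\varepsilon\mapsto\lambda_1(\tilde\rho+\varepsilon(\rho-\tilde\rho))$ is concave on $[0,1]$ (Proposition \ref{prop1}), so its increment $\lambda_1(\rho)-\lambda$ is bounded above by its right derivative at $0$; Lemma \ref{lem10}(2) (valid since $\tilde\rho\in L^{\alpha}_{s}$) evaluates that right derivative along nonnegative directions as $v\mapsto(\int b|u|^{q}\,dx)^{-1}\int|\nabla u|^{q}v\,dx$ (using $u=\|u\|_{q}\phi_1(\tilde\rho)$), and since this density lies in $L^{\alpha'}$ and defines a bounded linear functional, superadditivity of directional derivatives of a concave functional upgrades this to the supergradient inequality $\lambda_1(\rho)\le\lambda+(\int b|u|^{q}\,dx)^{-1}\int|\nabla u|^{q}(\rho-\tilde\rho)\,dx$ for all $\rho\in L^{\alpha}$. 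For $\rho\in M_{\lambda}$ the left side is $\ge\lambda$, so $\int|\nabla u|^{q}(\rho-\tilde\rho)\,dx\ge 0$, completing the argument. (The same inequality can be read off a Picone--type inequality as well: testing the eigenfunction equation for $\phi_1(\rho)$ with $u^{q}/(\phi_1(\rho)+\varepsilon)^{q-1}$ and letting $\varepsilon\downarrow 0$ gives $\lambda_1(\rho)\int b|u|^{q}\,dx\le\int\rho|\nabla u|^{q}\,dx$, whence $\int\rho|\nabla u|^{q}\,dx\ge\lambda\int b|u|^{q}\,dx=\int\tilde\rho|\nabla u|^{q}\,dx$.)

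The bookkeeping above is routine once one notices the exponent relations $p-q=q/(\alpha-1)$ and $p/q=\alpha'$ forced by $p=q\alpha/(\alpha-1)$ --- they are precisely what put $\tilde\rho-\bar\rho$ in $L^{\alpha}$ and $(\tilde\rho-\bar\rho)^{\alpha-1}=|\nabla u|^{q}$ in the dual $L^{\alpha'}$, so that $\tilde\rho$ is an admissible competitor in \eqref{MinP} and $J'(\tilde\rho)$ pairs against the density occurring in $D\lambda_1(\tilde\rho)$ (the same structure as in the computation after \eqref{eq:fin}). The hard parts will be (i) the fact that a nonnegative weak eigenfunction of $\mathcal{L}_{\tilde\rho}$ must correspond to the principal eigenvalue, and (ii) the passage from the one--sided derivative formula of Lemma \ref{lem10} to the global supergradient inequality for $\lambda_1$ (equivalently, in the Picone route, admissibility of the test functions and control of the limit $\varepsilon\downarrow 0$): both are classical for the (weighted) $q$--Laplacian but need care here because $\tilde\rho$ and the competitors $\rho\in M_{\lambda}$ are merely measurable and possibly unbounded, so one must lean on $u,\phi_1(\tilde\rho),\phi_1(\rho)\in L^{\infty}$ and on the weighted Sobolev regularity from Lemma \ref{lem1} and condition \eqref{EqC}.
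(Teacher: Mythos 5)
Your first half coincides with the paper's: you pass to $\tilde\rho=\bar\rho+|\nabla u|^{q/(\alpha-1)}\in L^\alpha_s$, read \eqref{eq:Nonl} as the eigenvalue equation for $\mathcal{L}_{\tilde\rho}$, and identify $\lambda=\lambda_1(\tilde\rho)$, $u=k\,\phi_1(\tilde\rho)$. One correction there: Lemma \ref{lem1} does not state that a nonnegative eigenfunction forces the principal eigenvalue; the paper gets $\lambda\geq\lambda_1(\tilde\rho)$ from the Rayleigh quotient of $u$ and the reverse inequality from the Picone-type Lemma \ref{lem:Picone} in the Appendix, which is exactly the "hard part (i)" you flag — so cite that lemma rather than \cite{Drabek1} folklore.

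For the minimality itself you genuinely diverge from the paper, and in a useful way. The paper stays local: it shows via concavity (Proposition \ref{prop1}) and Lemma \ref{lem10} that the directional derivative of $\lambda_1$, hence of $\|\rho-\bar\rho\|^\alpha_\alpha$, is nonnegative along smooth directions keeping $\tilde\rho+h$ in $\partial M_\lambda$, and concludes by a Taylor expansion. You instead combine convexity of $J(\rho)=\|\rho-\bar\rho\|^\alpha_\alpha$ with a supergradient inequality for the concave functional $\lambda_1$ at $\tilde\rho$, obtaining $\int|\nabla u|^q(\rho-\tilde\rho)\,dx\geq 0$ for every $\rho\in M_\lambda$ and thus \emph{global} minimality — a stronger and cleaner conclusion, and in fact the one the uniqueness argument for Theorem \ref{thm2} really leans on. The price is the step you call an "upgrade": Lemma \ref{lem10}(2) is stated only for nonnegative smooth $h$, and the superadditivity trick extends it to directions in $C^\infty_0$, but the competitors give directions $\rho-\tilde\rho$ that are merely in $L^\alpha$; you cannot then pass to the limit over smooth approximations of the direction, because $\lambda_1$ is only upper semicontinuous (Proposition \ref{prop2}), which is the wrong sense for that inequality. (To be fair, the paper silently applies \eqref{eq:Val} to sign-changing $h$ in \eqref{eq:lambgeq0}, so its rigor on this point is comparable.) The clean repair is essentially your parenthetical, made slightly more careful: an arbitrary $\rho\in M_\lambda$ need not satisfy \eqref{EqC}, so $\phi_1(\rho)$ may not exist; instead use $u$ itself (via smooth approximation, admissible since $|\nabla u|\in L^p$ and $|\nabla u|^q\in L^{\alpha'}$) as a test function in \eqref{lambda1} for $\rho$, giving $\lambda\int b|u|^q\,dx\leq\lambda_1(\rho)\int b|u|^q\,dx\leq\int\rho|\nabla u|^q\,dx$, while testing the equation $\mathcal{L}_{\tilde\rho}(u)=\lambda b|u|^{q-2}u$ with $u$ gives $\lambda\int b|u|^q\,dx=\int\tilde\rho|\nabla u|^q\,dx$; subtracting yields exactly the inequality your convexity argument needs, without invoking Lemma \ref{lem10} at all.
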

\begin{proof}
	Introduce 
	\begin{equation}\label{eq:tilderho}
		\tilde{\rho}:=\bar{\rho}+|\nabla u|^{q/(\alpha-1)}. 
	\end{equation}
	Then 
	$\tilde{\rho}\geq \bar{\rho}>0$ a.e. in $\Omega$, and consequently, 
	$$
	0<\tilde{\rho}^{-s}\leq \bar{\rho}^{-s}~~\Rightarrow~~ \tilde{\rho}^{-s} \in L^1
	$$
	with  $s \in (\frac{N}{q}, +\infty)\cap [\frac{1}{q-1}, +\infty)$. Hence, $\tilde{\rho}$ satisfies \eqref{EqC}. Observe that $u$ is a nonnegative eigenvalue of $	\mathcal{L}_{\tilde{\rho}}$, that is
	$$
	\mathcal{L}_{\tilde{\rho}}(u):=- {\rm div}(\tilde{\rho}  |\nabla u|^{q-2}\nabla u)={\lambda} b|u|^{q-2}u.
	$$
	On the other hand, since $\tilde{\rho}$ satisfies \eqref{EqC}, Lemma \ref{lem1} implies that there exists an eigenfunction $\phi_1(\tilde{\rho}) \in W^{1,q, \tilde{\rho}}_0\cap L^\infty$ of $\mathcal{L}_{\tilde{\rho}}$,  $\phi_1(\tilde{\rho})\geq 0$ in $\Omega$, with  corresponding eigenvalue $\lambda_1(\tilde{\rho})$, i.e.,
	$$
	- {\rm div}(\tilde{\rho}  |\nabla \phi_1(\tilde{\rho})|^{q-2}\nabla \phi_1(\tilde{\rho}))=\lambda_1(\tilde{\rho}) b |\phi_1(\tilde{\rho})|^{q-2}\phi_1(\tilde{\rho}).
	$$
	From \eqref{lambda1} it follows that $\lambda \geq \lambda_1(\tilde{\rho})$. Hence,  Lemma \ref{lem:Picone} from Appendix implies that $\lambda = \lambda_1(\tilde{\rho})$. This and Lemma \ref{lem1} yields that $u=k \phi_1(\tilde{\rho})$ with some constant $k>0$.

	Since $\lambda =\lambda_1(\tilde{\rho})$,  $\tilde{\rho} \in \partial M_\lambda$.
	By Proposition \ref{prop1} the map $[0,1)\ni t \mapsto \lambda_1(t(\tilde{\rho}+h)+(1-t)\tilde{\rho})$ is concave function, and therefore
	$$
	\lambda_1(\tilde{\rho}+t h)-\lambda_1(\tilde{\rho}) \geq t(\lambda_1(\tilde{\rho}+ h)-\lambda_1(\tilde{\rho}) ) =0, ~~\forall h\in L^\alpha~~\mbox{such that}~~ \tilde{\rho}+h \in \partial M_{\lambda},
	$$
	and consequently, by Lemma  \ref{lem10} we have
	\begin{equation} \label{eq:lambgeq0}
		\int |\nabla u|^q\cdot h\, dx = \frac{1}{k}\lim_{t\to 0}\frac{\lambda_1(\tilde{\rho}+t h)-\lambda_1(\tilde{\rho})}{t}\geq 0, \forall h\in C^\infty_0 ,~~ \tilde{\rho}+h \in \partial M_{\lambda}
	\end{equation}
	Note that \eqref{eq:tilderho} implies 
	$$
	D_{\tilde{\rho}} \|\tilde{\rho}-\bar{\rho}\|^\alpha_\alpha(h)=\alpha \int (\bar{\rho}-\tilde{\rho})|\bar{\rho}-\tilde{\rho}|^{\alpha-2}h\, dx|_{\rho=\tilde{\rho}}=\alpha\int |\nabla u|^\alpha \, dx,~~\forall h\in L^\alpha.
	$$
	Hence, by \eqref{eq:lambgeq0},
	\begin{equation}\label{eqDp}
		D_{\tilde{\rho}} \|\tilde{\rho}-\bar{\rho}\|^\alpha_\alpha(h)\geq 0, ~~~  \forall h\in C^\infty_0~~\mbox{such that}~~ \tilde{\rho}+h \in \partial M_{\lambda}.
	\end{equation}
	By the Taylor expansion, 
	$$ 
	\|\tilde{\rho}-\bar{\rho}+h\|^\alpha_\alpha= \|\tilde{\rho}-\bar{\rho}\|^\alpha_\alpha+  D_{\tilde{\rho}} \|\tilde{\rho}-\bar{\rho}\|^\alpha_\alpha(h)+o(\|h\|_\alpha),
	$$
	for sufficiently  small  $\|h\|_{\alpha}$.
	This by \eqref{eqDp} implies 
	$$
	\|\tilde{\rho}-\bar{\rho}+h\|^\alpha_\alpha\geq \|\tilde{\rho}-\bar{\rho}\|^\alpha_\alpha,
	$$ 
	for any  $h\in L^\alpha$ such that $ \tilde{\rho}+h \in \partial M_{\lambda}$ and sufficient small $\|h\|_\alpha$. Thus, indeed,  $\tilde{\rho}$ is a local minimum point of $\|\rho-\bar{\rho}\|^\alpha_\alpha$  on $M_{\lambda}$.

\end{proof}

Let us conclude the proof of $(2^o)$.
By the above, \eqref{eq:Nonl} possess a weak nonnegative solution $\hat{u}$ such that the functional $\|\rho-\bar{\rho}\|^\alpha_\alpha$ admits a global minimum at $\hat{\rho}=\bar{\rho}+|\nabla \hat{u}|^{q/(\alpha-1)}$ on $M_{\lambda}$ and  $\lambda=\lambda(\hat{\rho})$. Assume that there exists a second weak solution $\bar{u}$ of \eqref{eq:Nonl}. Then by Lemma \ref{lem:Nonl}, $\tilde{\rho}=\bar{\rho}+|\nabla \bar{u}|^{q/(\alpha-1)}$ is a  local minimum point of $\|\rho-\bar{\rho}\|^\alpha_\alpha$  on $M_{\lambda}$. However, due to the strict convexity of $M_{\lambda}$ and $\|\rho-\bar{\rho}\|^\alpha_\alpha$ this is possible  only if 
$\tilde{\rho}=\hat{\rho}$, and consequently $|\nabla \hat{u}|=|\nabla \bar{u}|$. Hence,
by the above 
$ \mathcal{L}_{\hat{\rho}}(\bar{u})=\lambda |\bar{u}|^{q-2}\bar{u}$
and $\mathcal{L}_{\hat{\rho}}(\hat{u})=\lambda |\hat{u}|^{q-2}\hat{u}$ with $\lambda=\lambda(\hat{\rho})$.
Moreover, since $|\nabla \hat{u}|=|\nabla \bar{u}|$, we have $\|\bar{u}\|_{1,q}=\|\hat{u}\|_{1,q}$. Hence,  Lemma \ref{lem1} yields that $\bar{u}=\hat{u}$.

\section{Proof of Theorem \ref{thm3}}

$(1^o)$ \,Let $\lambda>0$, $\bar{\rho} \in L^s_{\alpha,\lambda}$, and thus,  $\lambda_1(\bar{\rho})<\lambda$. 
Suppose, contrary to our claim, that there exists a sequence $\bar{\rho}_n \in L^s_{\alpha,\lambda}$, $n=0,1,\ldots$  such that $\bar{\rho}_n \to \bar{\rho}$ in $ L^\alpha$ as $n\to \infty$, and $\inf_{n}\|\hat{\rho}(\lambda, \bar{\rho}_n)-\hat{\rho}(\lambda, \bar{\rho})\|_\alpha=\delta>0$. 	Denote $\hat{\rho}_n:=\hat{\rho}(\lambda, \bar{\rho}_n)$, $n=1,\ldots$, $\hat{\rho}_0:=\hat{\rho}(\lambda, \bar{\rho})$, $\bar{\rho}_0=\bar{\rho}$,  
\begin{equation}\label{MinPn}
	a_n:=	\|\bar{\rho}_n -\hat{\rho}_n\|_\alpha=\inf_{\rho \in M_{\lambda}}\|\bar{\rho}_n -\rho\|_\alpha,~~n=0,1,\ldots.
\end{equation}
Note that by \eqref{MinP}, $a_n=\hat{\theta}_n^{1/\alpha}$. 
Since $\bar{\rho}_n \to \bar{\rho}_0$ in $L^\alpha$, 
\begin{equation}\label{eq:theta}
	\hat{a}:=\lim_{n\to +\infty}a_n= \lim_{n\to +\infty}\inf_{\rho \in M_{\lambda}}\|\bar{\rho}_n -\rho\|_\alpha \leq \inf_{\rho \in M_{\lambda}}\|\bar{\rho}_0 -\rho\|_\alpha=a_0.
\end{equation}
Denote $\zeta_n:=\bar{\rho}_n -\hat{\rho}_n$,   $n=0,1,\ldots$. By convexity of the norm $\|\cdot\|_\alpha$ we have
\begin{equation}\label{eq:convex1}
	\|\frac{1}{2}( \zeta_n+ \zeta_0)\|_\alpha\leq \frac{1}{2} \|\zeta_n\|_\alpha+\frac{1}{2} \|\zeta_0\|_\alpha, 
\end{equation}
while the convexity of $M_\lambda$ implies that
$
\frac{1}{2}( \hat{\rho}_n+ \hat{\rho}_0) \in M_\lambda,~~n=0,1,\ldots.
$
Hence, 
$$
a_0\leq \|\frac{1}{2}( \hat{\rho}_n+ \hat{\rho}_0)-\bar{\rho}_0\|_\alpha=\|\frac{1}{2}( \zeta_n+ \zeta_0)\|_\alpha,~~n=0,1,\ldots , 
$$ 
and thus, taking into account \eqref{eq:convex1}, we obtain
\begin{equation}\label{eq:theta2}
	2a_0\leq \|\zeta_n+ \zeta_0\|_\alpha\leq a_n+ a_0,~~n=0,1,\ldots.
\end{equation}
By \eqref{eq:theta}, $\hat{a}\leq a_0$. If $\hat{a}<a_0$, then \eqref{eq:theta2} implies a contraction. Thus,  $a_n \to a_0$ as $n\to +\infty$, and 
\begin{equation}\label{eq:theta3}
	\|\zeta_n+ \zeta_0\|_\alpha \to 2a_0~~\mbox{as}~~n \to +\infty.
\end{equation}
By the above we have $\|\zeta_n\|_\alpha=a_n \to a_0$  as $n \to +\infty$. Hence, in view of that $L^\alpha$ is an uniformly convex space, \eqref{eq:theta3} yields    
$$
\|\hat{\rho}_n-\hat{\rho}_0+\bar{\rho}_n-\bar{\rho}_0 \|_\alpha=\|\zeta_n- \zeta_0\|_\alpha \to 0~~\mbox{as}~~n \to +\infty.
$$
Since 
$$
\|\hat{\rho}_n-\hat{\rho}_0+\bar{\rho}_n-\bar{\rho}_0 \|_\alpha\geq \|\hat{\rho}_n-\hat{\rho}_0\|_\alpha- \|\bar{\rho}_n-\bar{\rho}_0 \|_\alpha
$$ 
and $\|\bar{\rho}_n-\bar{\rho}_0 \|_\alpha\to 0$ as $n \to +\infty$, we obtain $\|\hat{\rho}_n-\hat{\rho}_0\|_\alpha \to 0$, which is  a contradiction. Thus, the map $\hat{\rho}(\lambda, \cdot):  L^\alpha_s \to L^\alpha_s$ is continuous.

Let us show   the continuity of the map  $\hat{u}(\lambda, \cdot)$. 
In view of that  $\hat{\rho}(\lambda, \cdot):  L^s_{\alpha,\lambda} \to L^\alpha_s$ is continuous, and 
$$
|\nabla \hat{u}(\lambda, \rho)|=(\hat{\rho}(\lambda, \rho)-\rho)^\frac{\alpha-1}{q}~~\mbox{a.e. in}~~ \Omega,~~\rho \in L^s_{\alpha,\lambda},
$$
we infer that  $|\nabla \hat{u}(\lambda, \cdot)|:L^s_{\alpha,\lambda} \to L^\frac{q \alpha}{\alpha-1} \equiv L^p\subset L^q$ is continuous, and consequently $\|\nabla\hat{u}(\lambda, \cdot)\|_{q}, \|\nabla\hat{u}(\lambda, \cdot)\|_{p} \in C(L^s_{\alpha,\lambda})$. Moreover, this implies that  
\begin{equation}\label{eq:rhorho}
\hat{\rho}(\lambda, \cdot) |\nabla\hat{u}(\lambda, \cdot)|^q:  L^s_{\alpha,\lambda} \to L^1~~\mbox{is continuous}.
\end{equation}
From this  and $\|\nabla\hat{u}(\lambda, \cdot)\|_{p} \in C(L^s_{\alpha,\lambda})$  it is not hard to show  using \eqref{eq:Nonl} that $\hat{u}(\lambda, \cdot):  L^s_{\alpha,\lambda} \to L^q$ is continuous. Hence, $\hat{u}(\lambda, \cdot) :  L^s_{\alpha,\lambda} \to  W^{1,q}_0$ is continuous, and by \eqref{eq:rhorho}, we infer $\|\hat{u}(\lambda, \cdot)\|_{1,q, (\cdot)} \in C(L^s_{\alpha,\lambda})$.

$(2^o)$ \,	Let $\bar{\rho} \in L^\alpha_s$, $\lambda \in (\lambda_1(\bar{\rho}), +\infty)$.	Suppose, contrary to our claim, that there exists a sequence $\lambda_n \in (\lambda_1(\bar{\rho}), +\infty)$, $n=0,1,\ldots$, $\lambda_n\to \lambda$ as $n\to \infty$  such that $\inf_{n}\|\hat{\rho}(\lambda_n, \bar{\rho})-\hat{\rho}(\lambda, \bar{\rho})\|_\alpha=\delta>0$. Denote $\hat{\rho}^n:=\hat{\rho}(\lambda_n, \bar{\rho})$, $n=1,\ldots$, $\hat{\rho}^0:=\hat{\rho}(\lambda, \bar{\rho})$, $\lambda_0=\lambda$.

Clearly, there exists a monotone subsequence, which we again denote by $\lambda_n$,  such that $\lambda_n \to \lambda$.  Assume, for instance, that $\lambda\geq \ldots \geq \lambda_n\geq \lambda_{n+1}$, $n=1,\ldots$. 
Then 
\begin{equation}\label{eq:Ml}
	M_{\lambda} \subset M_{\lambda_{n+1}} \subset M_{\lambda_{n}}, ~~n=1,\ldots
\end{equation} 
Denote
$$
a^n:=\|\hat{\rho}^n -\bar{\rho}\|_\alpha=\inf_{\rho \in M_{\lambda_n}}\|\bar{\rho} -\rho\|_\alpha,~n=0,\ldots.
$$
From \eqref{eq:Ml} it follows $a^0\leq \ldots \leq a^n \leq \ldots$, $n=1,\ldots$. 
Define $t_n=\lambda_{n}/\lambda$, $n=1,\ldots$. Then $t_n \to 1$ as $n\to +\infty$. Due to homogeneity of $\lambda(\rho)$,  $\lambda_n=\lambda(t_n\hat{\rho}^0)$, and therefore, $t_n\hat{\rho}^0 \in M_{\lambda_{n}}$, $\forall n\geq 1$. Hence, 
$a^0\leq a^n \leq \|t_n\hat{\rho}^0 -\bar{\rho}\|_\alpha$, $\forall n\geq 1$.
Since $\|t_n\hat{\rho}^0 -\bar{\rho}\|_\alpha \to a^0$ as $n\to +\infty$, 
this yields $\lim_{n\to \infty} a^n \to a^0$.

Note that $\|\hat{\rho}^n -\bar{\rho}+\hat{\rho}^0 -\bar{\rho}\|_\alpha \leq a^n+a^0$, $\forall n\geq 1$. 
Due to convexity of $M_{\lambda_n}$, $(\hat{\rho}^n +\hat{\rho}^0) /2 \in M_{\lambda_n}$, and therefore, $\|\hat{\rho}^n -\bar{\rho}+\hat{\rho}^0 -\bar{\rho}\|_\alpha=2\|(\hat{\rho}^n +\hat{\rho}^0) /2 -\bar{\rho}\|_\alpha\geq 2a^n$. Thus, we have
$$
2a^n\leq \|\hat{\rho}^n -\bar{\rho}+\hat{\rho}^0 -\bar{\rho}\|_\alpha \leq a^n+a^0,
$$
which implies that 
\begin{equation}
	\|\hat{\rho}^n -\bar{\rho}+\hat{\rho}^0 -\bar{\rho}\|_\alpha \to 2a^0~~\mbox{as}~~n\to +\infty.
\end{equation}
Because of $\lim_{n\to \infty}\|\hat{\rho}^n -\bar{\rho}\|_\alpha \to a^0$,  $\|\hat{\rho}^0 -\bar{\rho}\|_\alpha = a^0$, and $L^\alpha$ is an uniformly convex space, this yields 
$$
\lim_{n\to \infty}\|(\hat{\rho}^n -\bar{\rho})-(\hat{\rho}^0 -\bar{\rho})\|_\alpha=\lim_{n\to \infty}\|\hat{\rho}^n -\hat{\rho}^0\|_\alpha = 0.
$$
We get a contradiction, and therefore, we obtain our claim. 

Similar to the proof of $(1^o)$, it can be shown that the continuity of  $\hat{\rho}(\cdot,\bar{\rho} ):   [\lambda_1(\bar{\rho}),+\infty) \to L^\alpha_s$  implies that   $\hat{u}(\cdot, \bar{\rho}):  [\lambda_1(\bar{\rho}),+\infty) \to W^{1,q}_0$ is  continuous, and   $ \|\nabla\hat{u}(\cdot,\bar{\rho})\|_{p} \in C[\lambda_1(\bar{\rho}),+\infty)$.

\section{Appendix A}

In the next result, we will rely on work of Heinonen-Kipelainen-Martio \cite{Heinonen}, so we need  some preliminaries from it. In \cite{Heinonen},  weighted Sobolev spaces $H^{1,q}(\Omega, \rho)$, $H_0^{1,q}(\Omega, \rho)$  with so-called p-admissible weights $\rho$ are considered (see pages 7-8 in \cite{Heinonen}). It can be verified by applying the theory of $A_p$-weights from \cite{Heinonen} that the function $\rho $ satisfying \eqref{EqC} is a $q$-admissible weight. 
Furthermore, the weighted space $H_0^{1,q}(\Omega, \rho)$ in \cite{Heinonen} corresponds to the space  $W^{1,q, \rho}_0$  since their definitions are the same because of the Poincar\'e inequality. A function $v \in L^{q, \rho}$ is called the gradient of $u \in W^{1,q, \rho}_0$ and  denoted by $v = \nabla u$ if there exists a sequence $\psi_j \in C^\infty_0(\Omega)$ such that $\int\rho |\nabla \psi_j- v|^q \, dx \to 0$ as $j\to +\infty$ (cf. \cite{Heinonen}, page 12). 

\begin{lemma} Assume that $\rho$ satisfies \eqref{EqC}. 
	Let	$\phi \in W^{1,q, \rho}_0\cap L^\infty$, $u \in W^{1,q, \rho}_0$ and $u, \phi\geq 0$ a.e. in $\Omega$. Then $\displaystyle{\frac{\phi^q}{(u+\epsilon)^{q-1}} \in  W^{1,q, \rho}_0\cap L^\infty}$ and its  gradient is expressed as follows:
	\begin{equation}\label{eq:Picone}
		\nabla\left(\frac{\phi^q}{(u+\epsilon)^{q-1}}\right)=q \frac{\phi^{q-1}}{(u+\epsilon)^{q-1}}\nabla \phi-(q-1)\frac{\phi^q}{(u+\epsilon)^{q}}\nabla u.
	\end{equation}
\end{lemma}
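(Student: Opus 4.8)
The plan is to realise $\phi^q/(u+\epsilon)^{q-1}$ as a product and composition of functions that the calculus of weighted Sobolev spaces developed in \cite{Heinonen} can handle, and then to pass to a limit along smooth approximations of $\phi$ and $u$. Set $M:=\|\phi\|_\infty$; the feature that makes everything work is that the denominator is bounded away from zero, $(u+\epsilon)^{q-1}\ge\epsilon^{q-1}>0$ a.e.\ in $\Omega$. First I would observe that the vector field on the right-hand side of \eqref{eq:Picone} already belongs to $L^{q,\rho}(\Omega;\mathbb{R}^N)$, since a.e.
\[
\Bigl|\,q\,\frac{\phi^{q-1}}{(u+\epsilon)^{q-1}}\nabla\phi\,\Bigr|\le q(M/\epsilon)^{q-1}|\nabla\phi|,\qquad \Bigl|\,(q-1)\frac{\phi^{q}}{(u+\epsilon)^{q}}\nabla u\,\Bigr|\le (q-1)(M/\epsilon)^{q}|\nabla u|,
\]
and $|\nabla\phi|,|\nabla u|\in L^{q,\rho}$ because $\phi,u\in W^{1,q,\rho}_0$. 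So the right-hand side of \eqref{eq:Picone} is a legitimate candidate for the gradient; what remains is to identify it as such and to establish membership in $W^{1,q,\rho}_0\cap L^\infty$.

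Next I would build approximations. Pick $\psi_j,\eta_j\in C^\infty_0(\Omega)$ with $\int\rho|\nabla\psi_j-\nabla\phi|^q\,dx\to0$, $\int\rho|\nabla\eta_j-\nabla u|^q\,dx\to0$, and, after passing to a subsequence, $\psi_j\to\phi$, $\eta_j\to u$ a.e.\ in $\Omega$. Truncate: put $\widetilde\psi_j:=\min(\max(\psi_j,0),M)$ and $\widetilde\eta_j:=\max(\eta_j,0)$. By the continuity of truncation in the weighted Sobolev space (\cite{Heinonen}), $\widetilde\psi_j\to\phi$ and $\widetilde\eta_j\to u$ in $W^{1,q,\rho}$; moreover $0\le\widetilde\psi_j\le M$, $\widetilde\eta_j\ge0$, $\widetilde\psi_j\to\phi$, $\widetilde\eta_j\to u$ a.e., and $\widetilde\psi_j$ has compact support in $\Omega$. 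Set
\[
w_j:=\frac{\widetilde\psi_j^{\,q}}{(\widetilde\eta_j+\epsilon)^{q-1}}.
\]
Since (using $q\ge2$) $s\mapsto s^q$ and $s\mapsto s^{q-1}$ are $C^1$ with bounded derivative on $[0,M]$ and $t\mapsto(t+\epsilon)^{-(q-1)}$ is $C^1$ with bounded derivative on $[0,\infty)$, one may extend these to $C^1$ functions on all of $\mathbb{R}$ with bounded derivative (which changes nothing, as $\widetilde\psi_j\in[0,M]$ and $\widetilde\eta_j\ge0$) and apply the chain rule together with the Leibniz rule for products of bounded functions from \cite{Heinonen}: this gives $w_j\in W^{1,q,\rho}\cap L^\infty$ with $\|w_j\|_\infty\le M^q/\epsilon^{q-1}$ and
\[
\nabla w_j=q\,\frac{\widetilde\psi_j^{\,q-1}}{(\widetilde\eta_j+\epsilon)^{q-1}}\,\nabla\widetilde\psi_j-(q-1)\,\frac{\widetilde\psi_j^{\,q}}{(\widetilde\eta_j+\epsilon)^{q}}\,\nabla\widetilde\eta_j.
\]
Because $w_j$ is compactly supported in $\Omega$ and lies in $W^{1,q,\rho}$, it lies in $W^{1,q,\rho}_0$.

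Then I would let $j\to\infty$. Writing each term of $\nabla w_j$ as (uniformly bounded coefficient)$\,\times(\nabla\widetilde\psi_j-\nabla\phi)$, resp.\ $\times(\nabla\widetilde\eta_j-\nabla u)$, plus (a coefficient that is uniformly bounded and converges to $0$ a.e.)$\,\times\nabla\phi$, resp.\ $\times\nabla u$, and using $\nabla\widetilde\psi_j\to\nabla\phi$, $\nabla\widetilde\eta_j\to\nabla u$ in $L^{q,\rho}$ for the first pieces and dominated convergence (with dominating functions constant multiples of $\rho|\nabla\phi|^q$ and $\rho|\nabla u|^q$) for the second, one obtains $\nabla w_j\to$ (right-hand side of \eqref{eq:Picone}) in $L^{q,\rho}$. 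Hence $(\nabla w_j)$ is Cauchy in $L^{q,\rho}$, so by the weighted Friedrichs inequality $(w_j)$ is Cauchy in $W^{1,q,\rho}_0$ and, by completeness, converges there to some $W\in W^{1,q,\rho}_0$ whose gradient is the right-hand side of \eqref{eq:Picone}. Since $w_j\to\phi^q/(u+\epsilon)^{q-1}$ a.e.\ along the chosen subsequence, $W=\phi^q/(u+\epsilon)^{q-1}$; this function therefore belongs to $W^{1,q,\rho}_0$, has gradient given by \eqref{eq:Picone}, and lies in $L^\infty$ by the uniform bound $\|w_j\|_\infty\le M^q/\epsilon^{q-1}$.

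The computation behind \eqref{eq:Picone} is nothing but the product and chain rules; the real work — and the step I expect to be the main obstacle — is the infrastructure of the weighted space: the validity of those rules for $q$-admissible weights, the stability of $W^{1,q,\rho}$ under the truncations $v\mapsto\min(\max(v,0),M)$ and $v\mapsto\max(v,0)$, and above all the implication ``compactly supported in $\Omega$ and in $W^{1,q,\rho}$'' $\Rightarrow$ ``in $W^{1,q,\rho}_0$''. All of these are available in \cite{Heinonen} for $q$-admissible weights, which by the preamble to the Appendix is precisely the class of densities satisfying \eqref{EqC}; the one point requiring care is the extension of $s\mapsto s^q$, $s\mapsto s^{q-1}$, $t\mapsto(t+\epsilon)^{-(q-1)}$ to globally $C^1$, bounded-derivative functions before composing, which is harmless because $0\le\phi\le M$ and $u\ge0$ a.e.
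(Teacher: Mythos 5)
Your proof is correct in substance, but it takes a different route from the paper's. The paper argues at the level of the weighted calculus itself: following Lemma B.1 of Bobkov--Tanaka, it treats the two factors $\phi^q$ and $(u+\epsilon)^{-(q-1)}$ separately, invoking Theorems 1.18 and 1.24 of Heinonen--Kilpel\"ainen--Martio (composition with suitable functions and the product rule for bounded functions in $H^{1,q}(\Omega,\rho)$ for the $q$-admissible weight $\rho$) to conclude membership and the gradient formula \eqref{eq:Picone} in two lines. You instead rebuild the statement from the definition of $W^{1,q,\rho}_0$: approximate $\phi$ and $u$ by $C^\infty_0$ functions, truncate to $[0,M]$ resp.\ $[0,\infty)$, form the compactly supported Lipschitz quotients $w_j$, compute $\nabla w_j$ by the classical rules, and pass to the limit in $L^{q,\rho}$ using boundedness of the coefficients, dominated convergence, the weighted Friedrichs inequality and completeness. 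What your route buys is a cleaner treatment of the zero-trace claim: membership in $W^{1,q,\rho}_0$ comes for free from the compactly supported approximants, whereas the paper's sketch asserts $1/(u+\epsilon)^{q-1}\in W^{1,q,\rho}_0$, which cannot be literally right (this function is bounded below by a positive constant; it lies in $W^{1,q,\rho}\cap L^\infty$, and the zero-trace property of the product comes from the factor $\phi^q$). What it costs is that you must still import from the same toolbox the facts you flag yourself: validity of the chain/Leibniz rules for the HKM gradient, and above all the continuity in $W^{1,q,\rho}$ of the truncations $v\mapsto\min(\max(v,0),M)$, which is exactly the kind of statement (lattice operations for $p$-admissible weights) that the paper's citation of Theorem 1.18 covers; alternatively you could avoid asserting operator continuity by noting $|\nabla\widetilde\psi_j|\le|\nabla\psi_j|$ and $\nabla\widetilde\psi_j=\chi_{\{0<\psi_j<M\}}\nabla\psi_j$ and arguing via a.e.\ convergence plus Vitali/dominated convergence. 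Also note that the step ``compactly supported and in $W^{1,q,\rho}$ implies in $W^{1,q,\rho}_0$'' is needed only for your Lipschitz $w_j$, for which plain mollification (using $\rho\in L^1_{loc}$ and the uniform Lipschitz bound) already suffices, so you need not invoke the general version of that implication.
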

\begin{proof}
	The proof is similar  in spirit to Lemma B.1 in Bobkov-Tanaka \cite{Bobkov2} so we only sketch it.
	Applying Theorems 1.18 and 1.24 from \cite{Heinonen} according to arguments in \cite{Bobkov2} we derive that $\phi^q, 1/(u+\epsilon)^{q-1}\in W^{1,q, \rho}_0\cap L^\infty$, and their gradients can be calculated according to the classical rules. Hence, Theorem 1.24 from \cite{Heinonen} implies that  $\phi^q/(u+\epsilon)^{q-1}\in W^{1,q, \rho}_0\cap L^\infty$  and its gradient is expressed by \eqref{eq:Picone}. 
\end{proof}
Let $\rho$  satisfies \eqref{EqC}, $\phi \in W^{1,q, \rho}_0\cap L^\infty$, $u \in W^{1,q, \rho}_0$, and  $u, \phi \geq 0$ a.e. in $\Omega$. For $\epsilon >0$, denote
\begin{align*}
	L(\phi,u+\epsilon)&= |\nabla \phi|^q+(q-1)\frac{\phi^q}{(u+\epsilon)^{q}}|\nabla u|^q - q\frac{\phi^{q-1}}{(u+\epsilon)^{q-1}}|\nabla u|^{q-2}(\nabla \phi,\nabla u),\\
	R(\phi,u+\epsilon) &=  |\nabla \phi|^q-|\nabla u|^{q-2}(\nabla\left(\frac{\phi^q}{(u+\epsilon)^{q-1}}\right),\nabla u).
\end{align*}

\begin{lemma} \label{allegH} Assume that $\rho$  satisfies \eqref{EqC}, $\phi \in W^{1,q, \rho}_0\cap L^\infty$, $u \in W^{1,q, \rho}_0$, and  $u, \phi\geq 0$ a.e. in $\Omega$. 	Then $\rho L(\phi,u+\epsilon)=\rho R(\phi,u+\epsilon)$, and $\int \rho L(\phi,u+\epsilon)\,dx \geq 0$. 
\end{lemma}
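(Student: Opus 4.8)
The plan is to establish the two assertions of Lemma~\ref{allegH} separately, treating the pointwise algebraic identity $\rho L(\phi,u+\epsilon)=\rho R(\phi,u+\epsilon)$ first and the integral inequality $\int \rho L(\phi,u+\epsilon)\,dx\geq 0$ second. For the identity, I would simply substitute the gradient formula \eqref{eq:Picone} from the previous lemma into the definition of $R(\phi,u+\epsilon)$. Expanding
$$
|\nabla u|^{q-2}\left(\nabla\left(\frac{\phi^q}{(u+\epsilon)^{q-1}}\right),\nabla u\right)
=q\frac{\phi^{q-1}}{(u+\epsilon)^{q-1}}|\nabla u|^{q-2}(\nabla\phi,\nabla u)-(q-1)\frac{\phi^q}{(u+\epsilon)^q}|\nabla u|^q,
$$
and subtracting this from $|\nabla\phi|^q$ reproduces exactly the expression defining $L(\phi,u+\epsilon)$. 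Multiplying through by $\rho$ gives the stated equality a.e.\ in $\Omega$; this step is purely computational and uses that, by the previous lemma, the quotient lies in $W^{1,q,\rho}_0\cap L^\infty$ so all the gradients involved are well-defined elements of $L^{q,\rho}$.

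For the nonnegativity of $\int\rho L(\phi,u+\epsilon)\,dx$, the key observation is the pointwise bound $L(\phi,u+\epsilon)\geq 0$ a.e.\ in $\Omega$, which is the classical Picone-type inequality. Writing $a=\nabla\phi$, $b=\frac{\phi}{u+\epsilon}\nabla u$ (both measurable vector fields), one has
$$
L(\phi,u+\epsilon)=|a|^q+(q-1)|b|^q-q|b|^{q-2}(a,b),
$$
and the elementary convexity inequality $|a|^q\geq |b|^q+q|b|^{q-2}(b,a-b)$ — equivalently, the fact that $\xi\mapsto|\xi|^q$ is convex with subgradient $q|\xi|^{q-2}\xi$ — rearranges precisely to $L(\phi,u+\epsilon)\geq 0$. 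Since $\rho\geq 0$ a.e.\ by \eqref{EqC}, we get $\rho L(\phi,u+\epsilon)\geq 0$ a.e., and integrating yields $\int\rho L(\phi,u+\epsilon)\,dx\geq 0$ (the integral being well-defined in $[0,+\infty]$, and in fact finite because each of $\rho|\nabla\phi|^q$, $\rho\frac{\phi^q}{(u+\epsilon)^q}|\nabla u|^q$, and the cross term lies in $L^1$ by Hölder, using $\phi\in L^\infty$, $\epsilon>0$, and $\phi,u\in W^{1,q,\rho}_0$).

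The main obstacle, such as it is, is purely bookkeeping about integrability: one must check that $\rho R(\phi,u+\epsilon)\in L^1(\Omega)$ so that $\int\rho R(\phi,u+\epsilon)\,dx$ makes sense and equals $\int\rho L(\phi,u+\epsilon)\,dx$ term by term. This follows because $\frac{\phi^q}{(u+\epsilon)^{q-1}}\in W^{1,q,\rho}_0$ (previous lemma) together with $u\in W^{1,q,\rho}_0$ gives, via the weighted Hölder inequality, that both $\rho|\nabla\phi|^q$ and $\rho|\nabla u|^{q-2}\bigl(\nabla(\phi^q/(u+\epsilon)^{q-1}),\nabla u\bigr)$ are integrable; note here we genuinely use $\epsilon>0$ to keep $1/(u+\epsilon)^{q-1}$ bounded, and $\phi\in L^\infty$ to control the numerator. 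Once integrability is in hand, the identity and the pointwise inequality combine to give both conclusions immediately.
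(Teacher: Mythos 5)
Your proposal is correct and follows essentially the same route as the paper: both substitute the gradient formula \eqref{eq:Picone} into $R(\phi,u+\epsilon)$ to obtain the identity, and both control the cross term by an elementary estimate that forces nonnegativity. The only cosmetic difference is that you invoke the pointwise convexity (subgradient) inequality for $\xi\mapsto|\xi|^q$ to get $L(\phi,u+\epsilon)\geq 0$ a.e.\ before multiplying by $\rho\geq 0$ and integrating, whereas the paper applies the equivalent Young's inequality directly under the integral, so the two arguments are interchangeable.
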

\begin{proof}
	The proof is  similar as to Allegretto-Xi \cite{Allegr}, where it has been obtained for differentiable functions $u, \phi\geq 0$. Indeed, expanding $R(\phi,u+\epsilon)$ according to \eqref{eq:Picone} we obtain $L(\phi,u+\epsilon)=R(\phi,u+\epsilon)$. By Young's inequality, we have
	$$
	\int\rho \frac{\phi^{q-1}}{(u+\epsilon)^{q-1}}|\nabla u|^{q-2}|\nabla \phi| |u|\,dx \leq \frac{1}{q}\int \rho |\nabla \phi|^q\,dx+\frac{q-1}{q}\int \rho \frac{\phi^q}{(u+\epsilon)^{q}}|\nabla u|^q\,dx. 
	$$
	Using this to estimate $\int L(\phi,u+\epsilon)\,dx$ we obtained  $\int L(\phi,u+\epsilon)\,dx \geq 0$. 
\end{proof}
\begin{lemma}\label{lem:Picone}
	Assume that $\rho $  satisfies \eqref{EqC} and $\lambda \geq \lambda_1(\rho)$. Let $u,  \phi \in W^{1,q, \rho}_0\cap L^\infty$, and  $u, \phi \geq 0$ in $\Omega$ such that   they  weakly satisfy to
	\begin{align}
		- &{\rm div}(\rho  |\nabla u|^{q-2}\nabla u)= \lambda b|u|^{q-2}u,\label{eq:Spect}\\
		-&{\rm div}(\rho  |\nabla \phi|^{q-2}\nabla \phi)= \lambda_1(\rho) b |\phi|^{q-2}\phi. \label{eq:Spect2}
	\end{align}
	Then  $\lambda = \lambda_1(\rho)$. 
\end{lemma}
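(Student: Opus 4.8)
The plan is to run a Picone-type argument built on Lemma \ref{allegH}: I would test equation \eqref{eq:Spect} (the one carrying the eigenvalue $\lambda$) against the regularized ratio $\psi_\epsilon:=\phi^q/(u+\epsilon)^{q-1}$ formed from the principal eigenfunction $\phi$, then let $\epsilon\to 0^+$ and compare with the Rayleigh quotient of $\phi$. To begin, by the gradient formula \eqref{eq:Picone} one has $\psi_\epsilon\in W^{1,q, \rho}_0\cap L^\infty$, so $\psi_\epsilon$ is an admissible test function in the weak formulation of \eqref{eq:Spect}: indeed the form $h\mapsto\int\rho|\nabla u|^{q-2}(\nabla u,\nabla h)\,dx$ extends continuously from $C^\infty_0$ to $W^{1,q, \rho}_0$ (since $|\nabla u|^{q-1}\in L^{q/(q-1)}(\rho\,dx)$), and $h\mapsto\int b|u|^{q-2}u\,h\,dx$ extends as well, using $u\in L^\infty$, $b\in L^{\gamma/(\gamma-q)}$ and the embedding $W^{1,q, \rho}_0\hookrightarrow L^\gamma$ valid for $\gamma<q^*_s$ under \eqref{EqC} (cf.\ \cite{drabek}).

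Testing \eqref{eq:Spect} with $\psi_\epsilon$ and reorganizing via the definition of $R(\phi,u+\epsilon)$ gives
$$
\int\rho\,R(\phi,u+\epsilon)\,dx=\int\rho|\nabla\phi|^q\,dx-\lambda\int b\,u^{q-1}\frac{\phi^q}{(u+\epsilon)^{q-1}}\,dx .
$$
Since $u,\phi\in W^{1,q, \rho}_0\cap L^\infty$ and $u,\phi\geq 0$, Lemma \ref{allegH} applies and yields $\int\rho\,R(\phi,u+\epsilon)\,dx=\int\rho\,L(\phi,u+\epsilon)\,dx\ge 0$, hence
$$
\int\rho|\nabla\phi|^q\,dx\ \ge\ \lambda\int b\,u^{q-1}\frac{\phi^q}{(u+\epsilon)^{q-1}}\,dx .
$$

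Next I would let $\epsilon\to 0^+$. For this I use that $u>0$ a.e.\ in $\Omega$, which follows from the strong maximum principle (equivalently, the Harnack inequality for $q$-admissible weights, cf.\ \cite{Heinonen}) applied to \eqref{eq:Spect} with right-hand side $\lambda b\,u^{q-1}$, $b\in L^{\gamma/(\gamma-q)}$, since $u\ge 0$ and $u\not\equiv 0$. Then $u^{q-1}/(u+\epsilon)^{q-1}\to 1$ a.e., it stays bounded by $1$, and $b\phi^q\in L^1(\Omega)$ because $\phi\in L^\infty$ and $\Omega$ is bounded; dominated convergence therefore gives $\int\rho|\nabla\phi|^q\,dx\ge\lambda\int b\,\phi^q\,dx$. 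On the other hand, testing \eqref{eq:Spect2} with $\phi$ itself gives $\int\rho|\nabla\phi|^q\,dx=\lambda_1(\rho)\int b\,\phi^q\,dx$, and since $\phi\neq 0$ in $W^{1,q, \rho}_0$ and $\lambda_1(\rho)>0$ under \eqref{EqC}, we get $\int b\,\phi^q\,dx=\lambda_1(\rho)^{-1}\int\rho|\nabla\phi|^q\,dx>0$. Dividing the displayed inequality by this positive quantity gives $\lambda_1(\rho)\ge\lambda$, and combined with the hypothesis $\lambda\ge\lambda_1(\rho)$ this forces $\lambda=\lambda_1(\rho)$, as claimed.

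I expect the passage $\epsilon\to 0^+$ to be the main obstacle: without positivity of $u$ the limit of the right-hand side only recovers $\int_{\{u>0\}}b\,\phi^q\,dx$, which — since $b$ changes sign — bears no obvious relation to $\int_\Omega b\,\phi^q\,dx$. Thus the argument genuinely relies on the strong maximum principle / Harnack inequality for the weighted $q$-Laplacian, which is precisely why the appendix records that a density $\rho$ satisfying \eqref{EqC} is a $q$-admissible weight. Everything else — admissibility of $\psi_\epsilon$ as a test function and the algebraic identities — is routine once the gradient formula \eqref{eq:Picone} and the Picone inequality of Lemma \ref{allegH} are in hand.
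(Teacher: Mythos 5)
Your argument is essentially the paper's own proof: both test \eqref{eq:Spect} with the regularized Picone quotient $\phi^q/(u+\epsilon)^{q-1}$, invoke Lemma \ref{allegH} to get $\int\rho R(\phi,u+\epsilon)\,dx\ge 0$, pass to the limit $\epsilon\to 0^+$ by dominated convergence, and compare with \eqref{eq:Spect2} tested against $\phi$. The only difference is that you additionally justify the a.e.\ convergence by establishing $u>0$ via the strong maximum principle/Harnack inequality for $q$-admissible weights and you verify $\int b\,\phi^q\,dx>0$ before dividing --- two points the paper's proof leaves implicit --- so this is a refinement of the same route rather than a different one.
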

\begin{proof}
	By Lemma  \ref{allegH} and \eqref{eq:Spect} we obtain
	\begin{align*}
		0\leq &\int_{\Omega_0} L_\rho(\phi,u+\epsilon)\,dx=\int R_\rho(\phi,u+\epsilon)\, dx=\\
		& \int\rho|\nabla \phi|^q\,dx+\int{\rm div}(\rho  |\nabla u|^{q-2}\nabla u)\frac{\phi^q}{(u+\epsilon)^{q-1}}\,dx= \\
		&\int\rho|\nabla \phi|^q\,dx-\lambda\int b u^{q-1}\frac{\phi^q}{(u+\epsilon)^{q-1}}\,dx.
	\end{align*} 
	Observe, 
	$$
	0\leq b u^{q-1}\frac{\phi^q}{(u+\epsilon)^{q-1}}\leq b \phi^q\leq b \|\phi\|^q_\infty<+\infty~~\mbox{a.e. in }~\Omega
	$$
	since $\phi \in L^\infty$. Furthermore, by \eqref{EqC2}, $ b \|\phi\|^q_\infty\in L^{\frac{\gamma}{\gamma-q}}$, and 
	$$
	bu^{q-1}\frac{\phi^q}{(u+\epsilon)^{q-1}} \to b\phi^q ~~\mbox{a.e. in }~\Omega~\mbox{as}~\epsilon \to 0. 
	$$
	Hence, by the Lebesgue dominated convergence theorem 
	$$
	\int b u^{q-1}\frac{\phi^q}{(u+\epsilon)^{q-1}}\,dx \to \int b \phi^q\,dx~\mbox{as}~\epsilon \to 0,
	$$
	and therefore, by \eqref{eq:Spect2} we have
	$$
	0\leq \int\rho|\nabla \phi|^q\,dx-\lambda \int b \phi^q\,dx=-(\lambda- \lambda_1(\rho))\int b \phi^q\,dx.
	$$
	Since $\lambda \geq \lambda_1(\rho)$, we obtain $\lambda = \lambda_1(\rho)$.

\end{proof}
\bibliographystyle{amsplain}

\end{document}